\documentclass{article}
\usepackage{amsmath,amssymb,amsthm}

\usepackage{calrsfs}
\usepackage[all]{xy}

\newtheorem{theorem}{Theorem}[section]
\newtheorem{proposition}[theorem]{Proposition}
\newtheorem{corollary}[theorem]{Corollary}

\theoremstyle{definition}

\theoremstyle{remark}
\newtheorem{remark}[theorem]{Remark}
\newtheorem{example}[theorem]{Example}

\newcommand{\cat}{\mathrm{C}}
\newcommand{\mon}{\mathbf{Mon}}
\newcommand{\id}{\mathrm{id}}
\newcommand{\cosk}{\mathbf{Cosk}}

\author{Ivan Yudin
\\ Departamento de Matematica\\ Universidade de Coimbra\\Apartado 3008\\
3001-454 Coimbra\\ Portugal\\
\texttt{yudin@mat.uc.pt}
\thanks{The work is supported by the FCT Grant SFRH/BPD/31788/2006. The
financial support by CMUC and FCT gratefully acknowledged.}
}
\title{The nerve of a crossed module}

\begin{document}
\maketitle
\begin{abstract}
	We give an explicit description for the nerve of crossed module of
	categories.
\end{abstract}
\section{Introduction}

Let $T$ be a topological space. It is said that $T$ has a type $k$ if all the
homotopy groups $\pi_n\left( T \right)$ are zero for $n>k$. It is known that the
categories of groups and of $1$-types are equivalent. In~\cite{nervegroup}
 Eilenberg  and Maclane constructed for every group $G$ a simplicial set
$BG$ such that the topological realization $\left|BG\right|$ of $BG$ is the corresponding
$1$-type. In fact they gave three different description for $BG$ called
homogeneous, non-homogeneous and matrix description. They used these
descriptions to get the explicit chain complex that computes the cohomology
groups of $\left|BG\right|$. This was the born of the homology theory for
algebraic objects. 

It turns out that non-homogeneous description of $BG$ is the most useful one.
This description
was used by Hochschild in~\cite{hochschild} to define the Hochschild complex for
an arbitrary associative algebra $A$ that coincides with the complex
constructed by Eilenberg and Maclane when $A$ is a group algebra.
It also inspired the definition of the nerve of small category and definition of
Barr cohomology. In fact, it is difficult to image the modern mathematics
without non-homogeneous description of $BG$.

In \cite{whitehead} Whitehead showed that $2$-types can be described by crossed
modules of groups. Blakers constructed in~\cite{blakers} for every crossed
module of groups $\left( A,G \right)$ the complex $N^B_*\left( A,G \right)$
whose geometrical realization is the $2$-type corresponding to $\left( A,G
\right)$. In fact he has done this for arbitrary crossed complexes of groups
that describe $k$-type for any $k\in\mathbb{N}$. In the case of $k=1$ his description
coincides with the matrix description of Eilenberg-Maclane for $BG$.

In this article we give an explicit description of a simplicial set $N\left( A,\cat \right)$ for a
crossed monoid $\left( A,\cat \right)$ in terms of certain matrices. This
simplicial set is isomorphic to the one constructed by Blakers in case
$\left( A,\cat \right)$ is a crossed module of groups $\left( A,G \right)$. The difference is that
the elements of $N_k\left( A,\cat \right)$ are described as collections of
elements in $A$ and $G$ without any relations between them, however  the elements
of $N^B_k\left( A,G \right)$ are described as collections of elements in
$A$ and $G$ that should satisfy certain conditions between them. 

The paper is organized as follows. In Section~\ref{simplicial} we recall the
definition of simplicial set and their elementary properties.
Section~\ref{crossed} contains the main result of the paper. Namely, we describe
the simplicial set $N\left( A,\cat \right)$ for an arbitrary crossed monoid
$\left( A,G \right)$. In Theorem~\ref{main1} we prove that $N\left( A,\cat
\right)$ is indeed a simplicial set. 

In Section~\ref{coskeletal} we prove that $N\left( A,\cat \right)$ is $
4$-coskeletal. Moreover, in case $\left( A,\cat \right)$ is a crossed module of
groups it turns out that $N\left( A,\cat \right)$ is $3$-coskeletal.

In Section~\ref{kan} we check that $N\left( A,\cat \right)$ is a Kan simplicial
set if $\left( A,\cat \right)$ is a crossed module of groups. We also check that
the homotopy groups of $\left( A,\cat \right)$ and $N\left( A,\cat \right)$ are
isomorphic in this case. 

In the next version of this paper we shall give a comparison between our
construction and the construction of Blakers~\cite{blakers} and the construction
of Moerdijk and Svensson~\cite{moerdijk}. 

\section{Simplicial set}
\label{simplicial}
For the purpose of this paper a simplicial set is a sequence of sets $X_n$,
$n\ge 0$ with maps $d_j\colon X_n\to X_{n+1}$ and $s_j\colon X_n\to X_{n-1}$,
$0 \le j\le n$
such that for $i<j$:
\begin{align}
	\label{simp1}
	d_jd_k &= d_{k-1} d_j \\  
	\label{simp2}
	d_j s_k& = s_{k-1} d_j \\
	\label{simp3}
	d_j s_j& = \id \\
	\label{simp4}
	d_{j+1} s_j &= \id\\
	\label{simp5}
	d_k s_j& = s_j d_{k-1}  \\
	\label{simp6}
	s_j s_{k-1}& = s_{k }s_j .
\end{align}

The \emph{$n$-truncated simplicial set} is defined as a sequence of sets
$X_0$, \dots, $X_n$ with the maps $d_j\colon X_k\to X_{k-1}$, $s_j\colon X_k\to
X_{k+1}$ for all $k$ and $j$ they have sense, that satisfy the same identities
as same-named maps for a simplicial set. 

We denote the category of simplicial sets by $\triangle^{op} Sets$ and the
category of $n$-truncated simplicial sets by $\triangle^{op}_{n} Sets$. Then we
have an obvious forgetful functor $tr^n\colon \triangle^{op} Sets \to
\triangle_n^{op} Sets$. This functor has a right adjoint $cosk^n\colon
\triangle^{op}_n Sets \to \triangle^{op} Sets$. 
The composition functor $cosk^n tr^n$
will be denoted by $\cosk^n$. 
Thus $\cosk^n$ is a monad on the category of simplicial sets. We say that
$X$ is \emph{$n$-coskeletal} if the unit map $\eta_X\colon X \to \cosk^n X$ is
an isomorphism.

For every simplicial set $X_\bullet$ we define $\bigwedge^n X$ as a simplicial
kernel of the maps $d_j\colon X_{n-1}\to X_{n-2}$, $0\le j\le n-1$. In other
words $\bigwedge^n X$ is a collection of sequences $\left( x_0,\dots,x_n
\right)$, $x_j\in X_{n-1}$, such that  $d_j x_k = d_{k-1}x_j$ for all $0\le
j<k\le n-1$. We have the natural boundary map $b_n\colon X_n\to \bigwedge^n X$ defined by
$$
b_n\colon x\mapsto \left( d_0\left( x \right), \dots, d_n\left( x \right) \right). 
$$
\begin{proposition}
	\label{coskelet}
	Let $X$ be a simplicial set. Then $X$ is $n$-coskeletal if and only if
 for every
	$N>  n$ the map  $b_N$ is a  bijection. 
\end{proposition}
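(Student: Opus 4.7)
The plan is to reduce the statement to an explicit description of $\cosk^n X$ in each dimension above $n$, and then match the unit $\eta_X$ with the boundary map $b_N$.

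First, I would unpack the right adjoint $cosk^n$ in dimensions $N > n$. By the adjunction $tr^n \dashv cosk^n$, for any $n$-truncated simplicial set $Y$ one has that $(cosk^n Y)_{n+1}$ is precisely the simplicial kernel of the face maps of $Y$ at level $n$, that is, $(cosk^n Y)_{n+1} = \bigwedge^{n+1} Y$. Applied to $Y = tr^n X$, this gives $(\cosk^n X)_{n+1} = \bigwedge^{n+1} X$, and the unit $\eta_X$ in dimension $n+1$ is, by inspection of the adjunction, the boundary map $b_{n+1}$. More generally, I would argue by induction on $N > n$ that $(\cosk^n X)_N = \bigwedge^N \cosk^n X$; this is an iterated-kernel description of the right adjoint and follows because $\cosk^n X$ is itself $n$-coskeletal (being a value of the monad $\cosk^n$, which is idempotent on the image).

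With this in hand, both directions become transparent. For the forward direction, assume $X$ is $n$-coskeletal, so $\eta_X$ is an iso. Then in dimension $N>n$ the map $\eta_{X,N}\colon X_N\to(\cosk^n X)_N$ is a bijection. Since $X$ agrees with $\cosk^n X$ in dimensions strictly less than $N$ (by the inductive description), $\bigwedge^N X = \bigwedge^N\cosk^n X = (\cosk^n X)_N$, and the map $\eta_{X,N}$ is identified with $b_N$. Hence $b_N$ is a bijection.

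Conversely, assume $b_N$ is a bijection for every $N>n$. I would induct on $N$ to show that $\eta_{X,N}$ is a bijection. In dimensions $\le n$, $\eta_X$ is the identity by construction. For the inductive step in dimension $N>n$, the inductive hypothesis gives $X_k = (\cosk^n X)_k$ for all $k < N$, so $\bigwedge^N X$ and $\bigwedge^N \cosk^n X = (\cosk^n X)_N$ are literally the same set, and $\eta_{X,N}$ coincides with $b_N$, which is a bijection by assumption. Compatibility with face maps is automatic from the definition of $b_N$ and $\bigwedge^N$; compatibility with degeneracies follows because the simplicial identities \eqref{simp2}--\eqref{simp6} force the degeneracies on $\bigwedge^N X$ to be whatever makes $b_N$ simplicial, and these are precisely the degeneracies of $\cosk^n X$.

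The step I expect to be the main obstacle is the iterated-kernel identification $(\cosk^n X)_N = \bigwedge^N \cosk^n X$ for $N>n+1$; this is a structural property of the right adjoint and requires a small categorical argument (or an inductive computation using that the truncation is unchanged and that the adjunction determines each new dimension as a simplicial kernel). Once this is available, the rest of the argument is a direct comparison between $\eta_X$ and $b_N$ dimension by dimension.
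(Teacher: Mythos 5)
Your proposal is correct and follows essentially the same route as the paper: both directions hinge on the iterated simplicial-kernel description $(\cosk^n X)_N \cong \bigwedge^N \cosk^n X$ (obtained from idempotency of $\cosk^n$) and on identifying the unit $\eta_{X,N}$ with $b_N$ once $X$ and $\cosk^n X$ agree below dimension $N$; the paper's backward induction decomposing $\eta_{X,N+1}$ as $\tau$ followed by $b_{N+1}$ is exactly your inductive step. The only difference is cosmetic: the paper outsources the identification of $X_N\to\cosk^N(X)_N$ with $b_N$ to Duskin, whereas you sketch the adjunction computation yourself.
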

\begin{proof}
	Note that for every $N>n$ the canonical map  $$\cosk^n\left( X \right)
	\to \cosk^{N-1}\left(\cosk^n\left( X \right)\right)$$ is an isomorphism.
	Thus if $X$ is $n$-coskeletal it is also $N-1$-coskeletal. 
	Therefore the maps $X_N\to \cosk^N\left( X \right)_{N}$ are isomorphisms
	for all $N>n$. Int Section~2.1 in~\cite{duskin} it is shown that these
	maps coincide with $b_N$. This shows that the maps $b_N$ are
	isomorphisms for all $N>n$.

	Now suppose that all the maps $b_N$ are isomorphisms. 
	The map $\eta_X\colon X\to \cosk^n X$ is an isomorphism in all degrees
	up to $n$ by definition of the functor $\cosk^n$. We proceed further by
	induction on degree. Suppose we know that $\eta_X\colon X \to \cosk^n X$
	is an isomorphism in all degrees up to $N\ge n$. Therefore the map 
	\begin{align}
		\label{map_eta}
		\tau\colon \bigwedge\nolimits^{N+1}  X \to \bigwedge\nolimits^{N+1}\cosk^n X
	\end{align}
	induced by the $N$-th component of $\eta_X$ is an isomorphism. But now
	the set on the right hand side of \eqref{map_eta} is
	$\left(\cosk^{N}\cosk^n X\right)_{N+1} \cong \left( \cosk^n X
	\right)_{N+1}$. As $n$-th component of $\eta_X$ decomposes into the
	product of $\tau$ and $b_n$ we get that it is an isomorphism.  
\end{proof}
Define the set $\bigwedge^n_l X$ of \emph{$l$-horns in dimension $n$} to be the
collection of $n$-tuples $\left( x_0,\dots, \widehat{x}_l,\dots, x_n \right)$ of
elements in $X_{n-1}$ such that $d_jx_k = d_{k-1} x_j$ for all $0\le j<k \le
n-1$ different from $l$. There are the natural maps
\begin{align*}
b^n_l\colon  X_n & \longrightarrow \bigwedge\nolimits^n_l X \\
x & \mapsto \left( d_0\left( x \right), \dots, \widehat{d_l\left( x
\right)}, \dots, d_n\left( x \right) \right).
\end{align*}
A complex $X$ is said to be \emph{Kan complex} if the maps $b^n_l$ are
surjective for all $0\le l\le n$. 
We define now based homotopy groups $\pi_n\left( X,x \right)$ for a Kan complexes
$X$. We follow to the exposition of~\cite{smirnov} on the pages 27-28.
Let $x\in X_0$. Then all the degenerations $s_{i_n}\dots s_{i_1}\left( x
\right)$ of $x$ in degree $n$ are mutually equal and will be denoted by the same
letter $x$. We define $\pi_n\left( X,x \right)$ to be the set
$$
\left\{\, y\in X_n \,\middle|\,  b^n\left( y \right) = \left( x,\dots,x \right) \right\}
$$
factorized by the equivalence relation
$$
y \sim z \Leftrightarrow \exists w\in X_{n+1} \colon b^{n+1}\left( w
\right) = \left( x,\dots,x, y,z \right).
$$
That $\sim$ is indeed an equivalence relations for a Kan set is shown at the end
of page 27 of \cite{smirnov}. Now we define a multiplication on $\pi_n\left(
X, x\right)$ as follows. Let $\left[ y \right]$, $\left[ z \right]\in
\pi_n\left( X,x \right)$ be equivalence classes containing $y$ and $z$,
respectively. Then the tuple
$$
\left( x,\dots,x, y,\varnothing, z \right)
$$
is an element of $\bigwedge^{n+1}_{n}$. Therefore there is an element $w\in
X_{n+1}$ such that $b^{n+1}_n \left( w \right) = \left( x,\dots,x,y,\varnothing,
z\right)$. We define $[y][z] = \left[d_n \left( w \right)\right]$. Again it is
shown in \cite{smirnov}, that this product is well defined and associative, $[x]$ is the
neutral element, and if $n\ge 2$ the product is commutative. 

There is a connection between coskeletal and Kan conditions for a simplicial
set. To see this we start with
\begin{proposition}
	\label{coskeletal_kan}
	Let $\left( x_0,\dots,\widehat{x}_l,\dots, x_n \right)\in \bigwedge^n_l
	X$. Then
	\begin{equation}
		\label{eq:coskeletal_kan}
	\left(
	y_0,\dots,y_{ n-1} \right)= \left(
	d_{l-1}x_0,\dots,d_{l-1}x_{l-1}, d_lx_{l+1}, \dots, d_lx_n \right)\in
	\bigwedge\nolimits^{n-1}X.
\end{equation}
	\end{proposition}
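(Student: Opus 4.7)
The plan is to verify, for every pair $0\le j<k\le n-1$, the compatibility identity $d_j y_k = d_{k-1} y_j$ that defines membership in $\bigwedge^{n-1} X$. The only tools required are the simplicial identity $d_a d_b = d_{b-1} d_a$ for $a<b$ and the horn-compatibility relations $d_a x_b = d_{b-1} x_a$ for $a<b$ with $a,b\ne l$.

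I would split the check into five cases according to the relative position of $j$, $k$, and $l$: (A) $j<k<l$, where both $y$-coordinates are of the form $d_{l-1}x$; (B) $j<k=l$; (C) $j<l<k$; (D) $j=l<k$; and (E) $l<j<k$, where both coordinates have the shifted form $d_l x_{\bullet+1}$. In every case the strategy is the same: transport the outer face operator ($d_{l-1}$ or $d_l$) to the outside of the composition via a simplicial identity, then apply the horn relation to the resulting inner expression. In case (A) the indices $j,k$ themselves already avoid $l$, and in case (E) the shifted indices $j+1,k+1$ exceed $l$, so the horn relation applies in both extremes.

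The slightly delicate cases are the three spanning the gap at $l$. Case (D) is illustrative: with $j=l<k$ one has
\begin{align*}
d_l y_k &= d_l d_l x_{k+1}, \\
d_{k-1} y_l &= d_{k-1} d_l x_{l+1} = d_l d_k x_{l+1},
\end{align*}
and the horn relation $d_{l+1} x_{k+1} = d_k x_{l+1}$ (legitimate since $l+1,k+1\ne l$) combined with the simplicial identity $d_l d_{l+1} = d_l d_l$ matches the two sides. Cases (B) and (C) are analogous: (B) uses the horn relation $d_j x_{l+1}=d_l x_j$ together with $d_{l-1}d_l=d_{l-1}d_{l-1}$, while (C) uses $d_j x_{k+1}=d_k x_j$ after pushing a $d_{l-1}$ to the outside via $d_j d_l = d_{l-1} d_j$ and $d_{k-1}d_{l-1}=d_{l-1}d_k$.

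The only real obstacle is index bookkeeping: verifying in each of the five subcases that the horn relation invoked uses indices both distinct from $l$, and that the correct simplicial identity rearranges the face operators in the intended way. There is no conceptual content beyond this case analysis.
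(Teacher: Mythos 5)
Your proof is correct and follows essentially the same route as the paper: a direct verification of the simplicial-kernel relations by case analysis on the positions of the indices relative to $l$, using only the simplicial identities $d_ad_b=d_{b-1}d_a$ and the horn compatibilities. The paper organizes the check into three cases by writing the relations in the equivalent form $d_k y_j = d_j y_{k+1}$ (so the boundary cases $k=l$ and $j=l$ are absorbed into the adjacent ranges), whereas you use five cases on $d_jy_k=d_{k-1}y_j$ directly; this is purely a bookkeeping difference.
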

\begin{proof}
		 Suppose $0\le j< k\le l-1$. Then 
\begin{align*}
	d_ky_j& = d_k\left( d_{l-1}x_j \right) = d_kd_{l-1}x_j  = d_{l-2}d_k x_j
	\\& =
	d_{l-2}d_j
	x_{k+1} = d_j\left( d_{l-1}x_{k+1} \right) = d_j y_{k+1}.
\end{align*}
	For $0\le j\le l-1<k\le n-1$ we get
	\begin{align*}
		d_ky_j &= d_kd_{l-1}x_j = d_{l-1}d_{k+1}x_j = d_{l-1}d_j
		x_{k+2} \\&= d_j d_lx_{k+2} = d_j y_{k+1}.
	\end{align*}

	Finally for $l-1<j<k\le n-1$ we have
	\begin{align*}
		d_k y_j &= d_kd_lx_{j+1} = d_l d_{k+1}x_{j+1} = d_l
		d_{j+1}x_{k+2} \\&= d_j d_{l}x_{k+2} = d_jy_k.
	\end{align*}
\end{proof}
Thus we have a well defined map \label{beta}$\beta^n_l\colon \bigwedge^n_l X \to
\bigwedge^{n-1} X$ given by \eqref{eq:coskeletal_kan}.

As a simple corollary of Proposition~\ref{coskeletal_kan} we get
\begin{corollary}
	\label{criterion}
	Suppose $b_n$ and $b_{n-1}$ are surjections. Then for every $0\le l\le
	n$ the maps $b^n_l$ are surjections.
\end{corollary}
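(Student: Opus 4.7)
The plan is to fill in the missing face $x_l$ using the surjectivity of $b_{n-1}$, and then lift the resulting complete tuple using the surjectivity of $b_n$.

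First I would take an arbitrary horn $\left( x_0,\dots,\widehat{x}_l,\dots,x_n \right)\in\bigwedge^n_l X$. Proposition~\ref{coskeletal_kan} tells us that the tuple
$$
y = \left( d_{l-1}x_0,\dots,d_{l-1}x_{l-1}, d_l x_{l+1},\dots, d_l x_n \right)
$$
lies in $\bigwedge^{n-1} X$. Since $b_{n-1}$ is assumed to be surjective, there exists an element $x_l\in X_{n-1}$ with $b_{n-1}(x_l)=y$; that is, $d_j x_l = y_j$ for every $0\le j\le n-1$.

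Next I would verify that the completed tuple $\left( x_0,\dots,x_l,\dots,x_n \right)$ belongs to $\bigwedge^n X$. The compatibility conditions $d_jx_k=d_{k-1}x_j$ for pairs $j<k$ both different from $l$ hold by hypothesis. The remaining conditions involve $x_l$ and are forced by the choice of $x_l$: for $j<l$ we have $d_j x_l = y_j = d_{l-1}x_j$, and for $k>l$ we have $d_{k-1}x_l = y_{k-1} = d_l x_k$ by the definition of $y$. Hence the enlarged tuple is indeed a simplicial kernel element.

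Finally, the surjectivity of $b_n$ yields an $x\in X_n$ with $b_n(x)=\left( x_0,\dots,x_l,\dots,x_n \right)$, and then $b^n_l(x)=\left( x_0,\dots,\widehat{x}_l,\dots,x_n \right)$, which is what we wanted. The only mildly subtle point is getting the index shift right when checking that the filled-in $x_l$ matches the compatibility conditions on both sides of the omitted position, but this is precisely what the formula in Proposition~\ref{coskeletal_kan} is built to deliver, so there is no real obstacle.
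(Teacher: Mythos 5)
Your proof is correct and follows exactly the paper's argument: use Proposition~\ref{coskeletal_kan} to land in $\bigwedge^{n-1}X$, fill in the missing face $x_l$ by surjectivity of $b_{n-1}$, check the completed tuple lies in $\bigwedge^n X$, and lift by surjectivity of $b_n$. The index-shift verification you spell out is the same one the paper leaves implicit.
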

\begin{proof}
	Let $x= \left( x_0,\dots,\widehat{x}_l,\dots,x_n \right)\in
	\bigwedge^n_{l} X$. Then by Proposition~\ref{coskeletal_kan}
	$$\beta^n_lx = \left(
	d_{l-1}x_0,\dots,d_{l-1}x_{l-1}, d_lx_{l+1}, \dots, d_lx_n \right)\in
	\bigwedge\nolimits^{n-1}X.$$ Since $b_{n-1}$ is surjective 
	there is $x_l\in X_{n-1}$ such that $d_jx_l = d_{l-1}x_j$ for $0\le j\le
	l-1$ and $d_jx_l = d_l x_{j+1}$ for $l\le j\le n-1$. Therefore
	$\left( x_0,\dots,x_n \right)\in \bigwedge^n X$ and since $b_n$ is
	surjective there is $z\in X_{n}$ such that $d_j z = x_j$, $0\le j\le n$. 
\end{proof}
\section{Category crossed monoids}
\label{crossed}
Let $\cat$ be a small category. We denote by $\cat_0$ the set of objects and by
$\cat_1$ the set of morphisms of $\cat$. 
 We will write $s\left(\alpha 
\right)$ for the source and $t\left( \alpha \right)$ for the target of the morphism
$\alpha\in \cat_1$. If $F\colon \cat\to \mon$ is a contravariant functor from
$\cat$ to the category of monoids, for $\alpha\in \cat\left( s,t \right)$ and $m\in F\left(
t \right)$ we write $m^\alpha$ for the result of applying
$F\left( \alpha \right)$ 
 to $m$. 

 A \emph{crossed monoid} over $\cat$ is a contravariant functor $A\colon \cat\to
 \mon$ together with a collection of functions $\partial_t\colon A\left( t
 \right)\to \cat\left( t,t \right)$, $t\in \cat_0$, such that 
\begin{align}
	\label{cr1}
\partial_t\left( a \right)& = t\\
	\label{cr2}
	\alpha\partial_s\left( a^\alpha \right)& = \partial_t( a) \alpha\\
	\label{cr3}
	ab& = b a^{\partial_t b} 
\end{align}
for all $s$, $t\in \cat_0$, $\alpha\in \cat\left( s,t \right)$, $a$ , $b\in A\left( t
\right)$. 
We will write $e_x$ for the unit of $A(x)$, $x\in \cat_0$. 

A morphism from a crossed module $\left( A,\cat\right)$ to a crossed module
$\left( B,\widetilde{\cat} \right)$ is a pair $\left( f,F \right)$, where
$F\colon \cat\to
\widetilde{\cat}$ is a functor and $f$ is a collection of homomorphisms $f_x\colon A\left( x
\right)\to B\left( F\left( x \right) \right) $ of monoids such that 
\begin{align}
	\label{eq:mor1}
	f_s\left( a^\alpha \right)&= f_t\left( a \right)^{F\left( \alpha \right)}\\
	F\left( \partial_t\left( a \right) \right)&= \partial_{F\left( t
	\right)}\left( f_t\left( a \right) \right)
\end{align}
for all $s$, $t\in \cat_0$, $\alpha\in \cat\left( s,t \right)$, $a\in A\left( t
\right)$. 
We denote the category of crossed monoids over small categories by $\mathbf{XMon}$. 
Note that $\mathbf{XMon}$ contains a full subcategory $\mathbf{XMod}$ of
\emph{crossed modules} whose objects $\left( A,\cat \right)$ are such that
$\cat$ is a groupoid and $A\left(t  \right)$ is a group for every $t\in \cat_0$.

Now we describe the nerve functor $N\colon \mathbf{XMon}\to
\triangle^{op}\mathbf{Sets}$ into the category of simplicial sets. 
Define $N_0\left( A,\cat \right) = \cat_0$.
 For
$n\ge 1$ we define $N_n\left( A,G \right)$ to be the set of $n\times k$ upper
triangular\footnote{Upper triangular means that  the places in the matrix under the diagonal are empty.} matrices $M = \left( m_{ij} \right)_{i\le j}$ such that 
there is a sequence $x\left( M \right) = \left( x_0\left( M \right),\dots,x_n\left( M \right) \right)$ of objects in $\cat$ such that 
\begin{itemize}
	\item $m_{jj}\in \cat\left( x_{j},x_{j-1} \right)$, $1\le j\le n$;
	\item $m_{ij}\in A\left( x_i \right)$ for $1\le i<j\le n$.\end{itemize}
We will identify  $N_1\left( A,\cat \right)$ with  $\cat_1$. 
We extend function $x$ on $N_0\left( A,cat \right)= \cat_0$ by $x\left( p
\right):= \left( p \right)$. 

Below we will sometimes indicate the empty places with the sign
$\varnothing$.  

Define $s_0\colon N_0\left( A,\cat \right)$ by $s_0\left( p \right) = 1_p$,
$p\in \cat_0$. 
For $n\ge 1$ and $0\le j\le n$ the matrix $M\in N_{n+1}\left( A,\cat \right)$
will be constructed from $M\in N_{n}\left( A,\cat \right)$ as follows
\begin{enumerate}
	\item first insert $e_{x_i\left( M \right)}$ at the $(j+1)$-st place of every row $i$ above the $j+1$-st row;
	\item insert $\left( \varnothing,\dots,\varnothing,1_{x_{j}\left( M \right)},
		e_{x_{j}\left( M \right)}, \dots, e_{x_{j}\left( M \right)}
		\right)$ as  the $j+1$-st row, where $1_{x_{j+1}\left( M
		\right)}$ stay on the $(j+1)$-st place.
	\item shift all elements below $(j+1)$-st row   one position to the
		right.
\end{enumerate}
\begin{example}
	For  $M\in N_3\left( A,\cat \right)$, $j=1$,  and $\left( x_0,x_1,x_2,x_3,x_4 \right) = x\left( M \right)$ we get
	\begin{equation*}
		\xymatrix@C=4em@R=6ex{
		{\left(
		\begin{array}{ccc}
			m_{11} & m_{12} & m_{13}\\
			& m_{22} & m_{23}\\
			&& m_{33}
		\end{array}
		\right)}\ar@{|->}[r]^(.45){\mbox{Step 1}}\ar@{|->}[d]_(.45){s_1} & 
		{\left(
		\begin{array}{cccc}
			m_{11} & e_{x_1} & m_{12} & m_{13}\\
& m_{22} & m_{23}\\
			&& m_{33}
		\end{array}
		\right)}
	\ar@{|->}[d]^(.45){\mbox{Step 2}}\\
	{\left( 
	\begin{array}{cccc}
				m_{11} & e_{x_1} & m_{12} & m_{13}\\
				& 1_{x_1} & e_{x_1} &e_{x_{1}}				\\
&& m_{22} & m_{23}\\
&			&& m_{33}
	\end{array}
	\right)}
	& 
		{\left( 
	\begin{array}{cccc}
				m_{11} & e_{x_1} & m_{12} & m_{13}\\
					& 1_{x_1} & e_{x_1} &e_{x_{1}}				\\
& m_{22} & m_{23}\\
			&& m_{33}
	\end{array}
	\right).} \ar@{|->}[l]_{\mbox{Step 3}} &
}
	\end{equation*}. 
\end{example}
Note that in the case $j=0$ the first step is skipped and in the case $j=n$ the
last step is skipped. 

Now define $d_0\colon N_1\left( A,\cat \right)\to N_0\left( A,\cat \right)$ to
be $s\colon \cat_1\to \cat_0$, and $d_1\colon N_1\left( A,\cat \right)\to
N_0\left( A,\cat \right)$ to be $t\colon \cat_1\to \cat_0$. 
Let $n\ge 2$ and $M\in N_n\left( A,\cat \right)$. We construct the matrix
$d_j\left( M \right) \in N_{n-1}\left( A,\cat \right)$ as follows
\begin{enumerate}
	\item if $j=0$ we just delete the first row;
	\item if $j= n$
		delete the last column;
	\item if $1\le j\le n-1$
		\begin{enumerate}
			\item at every row above the $j$-th row we multiply
				elements at $j$-th and $\left( j+1 \right)$-st
				places;
			\item shift all the elements at $j$-th row and below one
				position to the left;
			\item replace $j$-th and $\left( j+1 \right)$-st rows
				with the row:
				\begin{multline*}
\left(\varnothing ,\dots,\varnothing , m_{jj} \partial\left( m_{j,j+1}
\right)m_{j+1,j+1}, m_{j,j+2}^{\eta_{j+1,j+1}}m_{j+1,j+2}, \dots,\right.\\
\left. m_{j,n}^{\eta_{j+1,n-1}}m_{j+1,n} \right),
\end{multline*}
where
			 \begin{equation}
				 \label{eta}
			 			 \eta_{jk} =
						 m_{j+1,j+1}\partial\left(
						 m_{j+1,j+2}\dots
					 			 m_{j+1,k}\right).
							 			 \end{equation}
		\end{enumerate}
\end{enumerate}
For example
\begin{equation*}
	\xymatrix@R=8ex{
	{\left( 
	\begin{array}{ccccc}
		m_{11} & m_{12} & m_{13} & m_{14} & m_{15}\\
		& m_{22} & m_{23} & m_{24} & m_{25} \\
		&& m_{33} & m_{34} & m_{35} \\
		&&& m_{44} & m_{45}\\
		&&&& m_{55}
	\end{array}
	\right)} \ar@{|->}[d]^{\mbox{Steps (a) and (b)}} \ar@{|->}`[r]`[dd]^{d_2}[dd] & 
	\\ 
	{
	\left( 
	\begin{array}{ccccc}
		m_{11} & m_{12}m_{13} & m_{14} & m_{15}\\
		m_{22} & m_{23} & m_{24} & m_{25} \\
		& m_{33} & m_{34} & m_{35} \\
		&& m_{44} & m_{45}\\
		&&& m_{55}
	\end{array}
	\right)
	} \ar@{|->}[d]^{\mbox{Step (c)}} &  
	\\
	{\left( 
	\begin{array}{cccc}
		m_{11} & m_{12}m_{13} & m_{14} & m_{15}\\
		& m_{22}\partial\left( m_{23} \right)m_{33} &
		m_{24}^{m_{33}} m_{34} & m_{25}^{m_{33}\partial\left(
		m_{34}
		\right)} m_{35} \\
		&& m_{44} & m_{45} \\
		&&& m_{55}
	\end{array}
	\right)}& 
	 }
\end{equation*}
\begin{theorem}
	\label{main1}
	Let $\left( A,\cat \right)$ be a crossed monoid.
	The sequence of sets $N_n\left( A,\cat \right)$ with the maps $s_j$,
	$d_j$ defined above is a simplicial set.
\end{theorem}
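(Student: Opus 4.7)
The plan is to verify each of the six simplicial identities \eqref{simp1}--\eqref{simp6} directly from the matrix recipes for $d_j$ and $s_j$, keeping track of the object sequence $x(M)$ in parallel with the matrix since the $\partial$ and the action $(-)^\alpha$ appearing in the face maps depend on it. The verifications split by cases on where the indices sit relative to the first row, the last column, and each other; only one family of cases is genuinely algebraic, the rest being combinatorial bookkeeping about insertions and deletions of rows and columns.

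I would begin with \eqref{simp6}, which is purely combinatorial: inserting the two unit/identity rows (and their associated columns) in positions $j+1$ and $k+1$ in either order yields the same matrix, since the first insertion shifts the second target slot in exactly the required way. The face-degeneracy identities \eqref{simp2}--\eqref{simp5} are only slightly harder. When the inserted row of $s_j$ and the row collapsed by $d_\bullet$ are disjoint, row/column deletion commutes with row/column insertion after the appropriate reindexing. For the cancellation identities $d_j s_j = \id$ and $d_{j+1} s_j = \id$ one must also use \eqref{cr1} (so that $\partial$ of a unit is an identity morphism and units act trivially via the contravariant functor), together with the monoid unit laws; the inserted row then collapses against itself and returns the original matrix.

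The main obstacle is \eqref{simp1}, the face--face identity $d_j d_k = d_{k-1} d_j$, in the generic range $1 \le j < k-1 \le n-2$ where both face maps perform the nontrivial row-merging of step~(c). One fixes a generic entry and expands both sides using the definition \eqref{eta} of the accumulated twists $\eta_{jk}$. The two expressions are formally different but are equal in $A$ and in $\cat$ thanks to the crossed-monoid axioms: \eqref{cr2} lets one push $\partial$ past a morphism in order to reconcile the two orders in which $\partial$-twisted morphisms accumulate into the $\eta$'s, and \eqref{cr3} is the Peiffer-type rule $ab = b\, a^{\partial b}$ that reorders products of $A$-entries which become entangled when two adjacent rows are collapsed in opposite orders. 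Throughout one also uses that each action $(-)^\alpha$ is a monoid homomorphism and that $(a^\alpha)^\beta = a^{\alpha\beta}$ by contravariant functoriality.

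The boundary cases of \eqref{simp1} --- $j=0$ or $k=n$, where one of the face maps is simply a row or column deletion, and $k = j+1$, where three consecutive rows collapse into one --- are handled separately but are strictly easier: the deletion cases are combinatorial, and the adjacent case requires only a single application of \eqref{cr2} together with \eqref{cr3} to rewrite the common middle row in the two ways prescribed by the two orders of composition. Stringing the cases together then establishes all six identities and hence the theorem.
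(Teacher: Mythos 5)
Your overall strategy coincides with the paper's: verify the simplicial identities entry by entry, observe that only the row where the two operations interact requires any algebra, and resolve that interaction using the Peiffer rule \eqref{cr3}, the compatibility \eqref{cr2}, and functoriality of the action. Your case split (degeneracy--degeneracy purely combinatorial; face--degeneracy via units and \eqref{cr1}; face--face in the range $j<k-1$ via the accumulated twists \eqref{eta} together with \eqref{cr2} and \eqref{cr3}) matches the paper's division of the identities into two groups, and for those cases the toolkit you name is exactly what the paper uses.

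The plan goes wrong, however, in its assessment of the adjacent case $k=j+1$ of \eqref{simp1}, which you describe as ``strictly easier'' and as requiring ``only a single application of \eqref{cr2} together with \eqref{cr3}.'' In the paper this is the hardest computation in the whole proof, not the easiest. The difficulty is that the entry of $d_jd_{j+1}(M)$ at place $(j,l)$ is twisted by $\eta_{j+1,l}\left(d_{j+1}(M)\right)$, that is, by $\partial$ of a long product whose factors are themselves the already-merged entries $m_{j+1,i}^{\eta_{j+2,i-1}}m_{j+2,i}$. To compare this with the twist $\eta_{j+1,l+1}\eta_{j+2,l+1}$ that appears on the $d_j^2(M)$ side, one must first iterate the Peiffer rearrangement (the computation \eqref{complicated}) across the entire product so as to separate all the $m_{j+1,*}$ from all the $m_{j+2,*}$, and only then apply \eqref{cr2} to move $m_{j+2,j+2}$ past the resulting $\partial$; this yields the key identity $\eta_{j+1,l}\left(d_{j+1}(M)\right) = \eta_{j+1,l+1}\eta_{j+2,l+1}$, without which the two sides do not visibly agree. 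A single application of each axiom does not produce it. The axioms you invoke are sufficient in principle, but the plan as written understates, and in effect omits, the one step that carries most of the weight of the theorem.
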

\begin{proof}
	We have to check that the maps $d_j$ and $s_j$ satisfy the simplicial
	identities. For a convenience we divide them into two groups. Let
	$M\in N_n\left( A,\cat \right)$. In the first group we
	put the identities
	\begin{align*}
		d_jd_{j+1}\left( M \right)&= d_j^2\left( M \right) & 
		d_js_j\left( M \right) &= M\\
		\\
		s_{j+1}s_j\left( M \right) &= s_j^2\left( M \right) &
		d_{j+1}s_j\left( M \right) &= M
	&
		d_js_{j+1}\left( M \right)& = s_jd_j\left( M \right).\\
	\end{align*}
	The rest of the identities
	\begin{align*}
		d_jd_k\left( M \right) &= d_{k-1}d_j\left( M \right) &
		d_js_k\left( M \right) &= s_{k-1}d_j\left( M \right)
		\\
		s_js_{k-1} \left( M \right) &= s_ks_j\left( M \right) &
		d_ks_j\left( M \right) &= s_jd_{k-1}\left( M \right),
	\end{align*}
	where $j<k-1$, will be in the second group. 

Note that the effect of action of all above maps on the $i$-th row of the matrix
$M$ for $i<j$ is the same as the effect of action of the same named maps on the
nerve of $A\left( x_i\left( M \right) \right)$. Therefore the equality of the
matrices above the $j$-th row follows from the standard description of the nerve
of monoid.

Now the matrices $s_{j+1}s_j\left( M \right) = s_j^2\left( M \right)$ are equal
strictly under the $\left( j+1 \right)$-st as this part is obtained by shifting
the part of $M$ under the $\left( j-1 \right)$-st row two positions in the
south-east direction in both of them. Let $x=x_j\left( M \right)$. The $j$-th row of $s_{j+1}s_j\left( M
\right)$ is obtained from the sequence $\left( \varnothing, \dots, \varnothing,
1_x, e_x,\dots,e_x\right)$ by inserting $e_x$ after $1_x$ and thus coincides
with the $j$-th row of $s_j^2\left( M \right)$. Since $x_{j+1}\left( s_j\left(
M \right) =x
\right)$ the $\left( j+1 \right)$-st row
of $s_{j+1}s_j\left( M \right)$ is the sequence $\left( \varnothing, \dots,
\varnothing, 1_x,e_x,\dots,e_x \right)$ of the appropriate length. The
$\left( j+1 \right)$-st row of $s_js_j\left( M \right)$ is equal to the
$j$-th row of $s_j\left( M \right)$ and thus is the same sequence. This shows
that $s_{j+1}s_j = s_j^2\left( M \right)$. 

Now for the rest of matrices in the first group the part strictly bellow the
$j$-th row is obtained by shifting the elements of $M$ back and forth. It is not
difficult to see that these shifts bring the same-named elements to the same
positions in all four pairs of matrices. 

Similarly the parts strictly below the $j$-th row in matrices of second group
are obtained by applying the map with greater index and moving elements around.
Again the same elements will be in the same places.

Thus we have only to check that the $j$-th rows are equal in every pair of
matrices.  

We start with the matrices of the second group. Thus from now on $k-1>j$. 
In this case the $j$-th row of $d_jd_k\left( M \right)$ is calculated from $j$-th and
$\left( j+1 \right)$-st rows of $d_k\left( M \right)$:
$$
\begin{array}{cccccc}
	m_{j,j}& m_{j,j+1} & \dots & m_{j,k}m_{j,k+1} & \dots & m_{j,n}\\
	& m_{j+1,j+1} & \dots & m_{j+1,k} m_{j+1, k+1} & \dots &
	m_{j+1,n}\end{array}.
$$
Now the sequence of $\eta$'s defined by \eqref{eta} for the $\left( j+1
\right)$-st row of $d_k\left( M \right)$ is
$$
\left( \eta_{j+1,j+1}, \dots, \widehat{\eta}_{j+1,k}, \dots, \eta_{j+1,n}
\right).
$$
Therefore the $j$-th row of $d_jd_k\left( M \right)$ is 
\begin{multline*}
	\left(\varnothing, \dots, \varnothing,  m_{j,j}\partial\left( m_{j,j+1} \right) m_{j+1,j+1},
	m_{j,j+2}^{\eta_{j+1}} m_{j+1,j+2},\dots, \right.\\\left.\left( m_{j,k}m_{j,k+1}
	\right)^{\eta_{j+1,k}} m_{j+1,k}m_{j+1,k+1}, \dots,
	m_{j,n}^{\eta_{j+1,n-1}}m_{j+1,n} \right).
\end{multline*}
Now the $j$-th row of $d_{k-1}d_j\left( M \right)$ is obtained from the
$j$-th row of $d_j\left( M \right)$ by multiplying elements in the $\left( k-1
\right)$-st and $k$-th columns:
\begin{multline*}
		\left( m_{j,j}\partial\left( m_{j,j+1} \right) m_{j+1,j+1},
	m_{j,j+2}^{\eta_{j+1}} m_{j+1,j+2},\dots, \right.\\\left.
	m_{j,k}^{\eta_{j+1,k-1}} m_{j+1,k} 
	m_{j,k+1}^{\eta_{j+1,k}} m_{j+1,k}m_{j+1,k+1}, \dots,
	m_{j,n}^{\eta_{j+1,n-1}}m_{j+1,n} \right).
\end{multline*}
Thus the $j$-th rows of $d_jd_k\left( M \right)$ and $d_{k-1}d_j\left( M
\right)$ are the same outside the $\left( k-1 \right)$-th column, where the most complicated
looking elements are.  By \eqref{cr3} we get
\begin{align}
m_{j,k}^{\eta_{j+1,k-1}}& m_{j+1,k} 
m_{j,k+1}^{\eta_{j+1,k}} m_{j+1,k}m_{j+1,k+1}=\nonumber \\\label{complicated} &= 
m_{j,k}^{\eta_{j+1,k-1}} m_{j+1,k} 
m_{j,k+1}^{\eta_{j+1,k-1}\partial\left( m_{j+1,k} \right)} m_{j+1,k}m_{j+1,k+1}
\\
&= m_{j,k}^{\eta_{j+1,k-1}}m_{j,k+1}^{\eta_{j+1,k-1}}
m_{j+1,k}m_{j+1,k}m_{j+1,k+1}.\nonumber
\end{align}
This shows that $d_jd_k\left( M \right) = d_{k-1}d_j\left( M \right)$. 

Now we consider the pair of matrices $s_js_k\left( M \right)$ and $s_ks_j\left(
M \right)$. Denote $x_j\left( M
	\right)$ by $x$. The $j$-th row of
	$s_js_{k-1}\left( M \right)$ is the sequence $\left( 1_x,e_x,\dots,e_x
	\right)$ of the appropriate length. Now the $j$-th row of $s_ks_j\left(
	M \right)$ is obtained from the similar sequence, which is shorter by
	one element, by inserting this missing element. Thus $s_js_{k-1}\left(
	M \right) = s_ks_j\left( M \right)$. 

	The $j$-th row of $d_js_k\left( M \right)$ is obtained from $j$-th and
	$\left( j+1 \right)$-st rows of $s_k\left( M \right)$:
	$$
		\left(
		\begin{array}{cccccccccc}
		\varnothing& \dots& \varnothing& m_{j,j}& m_{j,j+1}&
		m_{j,j+1 }& \dots& e_{x_j}& \dots& m_{j,n}\\
		\varnothing & \dots & \varnothing & \varnothing & m_{j+1,j+1} &
		 m_{j+1,j+2}& \dots & e_{x_{j+1} } & \dots & m_{j+1,n}
	\end{array}	
		\right),
$$
where $e$'s are in the $\left( k+1 \right)$-st column. 
Since $\partial\left( e_{x_{j+1}} \right) = 1_{x_{j+1}}$ it is immediate that
the corresponding sequence of $\eta$'s has the form 
$$
\eta_{j+1,j+1}, \dots, \eta_{j+1,k-1}, \eta_{j+1,k}, \eta_{j+1,k},
\eta_{j+1,k+1}, \dots, \eta_{j+1,n},
$$
that is it is obtained from the sequence of $\eta$'s for $M$ by duplicating
$\eta_{j+1,k}$. Since $e_{x_j}^{\eta_{j+1,k}} = e_{x_j}$ we see that the
$j$-th row of $d_js_k\left( M \right)$ can be obtained from the $j$-th row of
$d_j\left( M \right)$ by inserting $e_{x_j}$ at place $k$. Thus the $j$-th row
of $d_js_k\left( M \right)$ is equal to the $j$-th row of $s_{k-1}d_j\left( M
\right)$. 

Further the $j$-row of $s_jd_{k-1}\left( M \right)$ is a sequence of appropriate
length
$$
\left( \varnothing,\dots, \varnothing, 1_x,e_x,\dots,e_x  \right),
$$
where $x=x_j\left( M \right)$. The $j$-th row of $d_ks_j\left( M \right)$
is obtained from the one element longer sequence by multiplying two neighboring
$e_x$. As $e_x^2 = e_x$ we get that $d_ks_j\left( M \right) = s_jd_{k-1}\left(
M \right)$. 

It is left to consider the equalities in the first group. 
First  we will show that the $j$-th rows of
$d_jd_{j+1}\left( M \right)$ and $d_{j}d_j\left( M \right)$ are the same. 
First we consider the most left elements of these rows. For $d_jd_{j+1}\left(
M \right)$ it is equal to
$$
m_{j,j} \partial\left( m_{j,j+1}m_{j,j+2} \right) \left( m_{j+1,j+1}
\partial\left( m_{j+1,j+2} \right) m_{j+2,j+2} \right)
$$
and for $d_j^2\left( M \right)$:
$$
\left( m_{j,j}\partial\left( m_{j,j+1} \right) m_{j+1,j+1} \right)
\partial\left( m_{j,j+2}^{m_{j+1,j+1}} m_{j+1,j+2} \right) m_{j+2,j+2}.
$$
These two elements are equal since 
$$
\partial\left(m_{j,j+2}\right) m_{j+1,j+1} = m_{j+1,j+1} \partial\left(
m_{j,j+2}^{m_{j+1,j+1}} \right)
$$
by \eqref{cr2}. 
Now let $l>j$. We will compute the element at the place $\left( j,l \right)$ in
$d_jd_{j+1}\left( M \right)$ and $d_j^2\left( M \right)$. First note that the
sequence of $\eta$'s for the $\left( j+1 \right)$-st row of $d_j\left( M
\right)$ coincide with the sequence of $\eta$'s of the 
$\left( j+2 \right)$-nd row of $M$. Taking to the account shift of columns on two
positions to the left the element of $d_j^2\left( M
\right)$ at the place $\left( j,l \right)$ is
\begin{equation}
	\label{eq14}
	\left( m_{j,l+2}^{\eta_{j+1,l+1}}m_{j+1,l+2} \right)^{\eta_{j+2,l+1}}
	m_{j+2,l+2}.
\end{equation}
	To compute the corresponding element in $d_jd_{j+1}\left( M \right)$ we have to
find
\begin{align*}
	\eta_{j+1,l}\left( d_{j+1}\left( M \right) \right)& = d_{j+1}\left( M
	\right)_{j+1,j+1} \partial\left( d_{j+1}\left( M \right)_{j+1,j+2}\dots
	d_{j+1}\left( M \right)_{j+1,l}\right) \\
	&=  m_{j+1,j+1}\partial\left( m_{j+1,j+2} \right)
	m_{j+2,j+2}\\&\phantom{=} \times  \partial\left(
	m_{j+1,j+3}^{\eta_{j+2,j+2}}m_{j+2,j+3}\dots
	m_{j+1,l}^{\eta_{j+2,l}}m_{j+2,l+1} \right).
\end{align*}
Now iterating \eqref{complicated} we can write the product under the $\partial$
as
\begin{align*}
	m_{j+1,j+3}^{\eta_{j+2,j+2}}&m_{j+1,j+4}^{\eta_{j+2,j+2}}\dots
	m_{j+1,l}^{\eta_{j+2,j+2}}m_{j+2,j+3}\dots m_{j+2,l+1} =\\&=  \left(
	m_{j+1,j+3}\dots m_{j+1,l+1}
	\right)^{m_{j+2,j+2}} m_{j+2,j+3}\dots m_{j+2,l+1}.
\end{align*}
Since by \eqref{cr2}
\begin{align*}	
	m_{j+2,j+2}\partial&\left(  \left(
	m_{j+1,j+3}\dots m_{j+1,l+1}
	\right)^{m_{j+2,j+2}} \right) =\\&=\partial\left(  
	m_{j+1,j+3}\dots m_{j+1,l+1}
	\right) m_{j+2,j+2} 
	\end{align*}
	we get
	\begin{align*}
		\eta_{j+1,l}\left( d_{j+1}\left( M \right) \right) &=
		m_{j+1,j+1}\partial\left( m_{j+1,j+2} 	m_{j+1,j+3}\dots
		m_{j+1,l+1}
		\right)\\&\phantom{=} \times
		m_{j+2,j+2}\partial\left(m_{j+2,j+3}\dots m_{j+2,l+1}\right) \\
		&=\eta_{j+1,l+1}\eta_{j+2,l+1}. 
	\end{align*}
	Therefore the $\left( j,l \right)$-th element of $d_jd_{j+1}\left( M
	\right)$ is
	\begin{align*}
		d_{j+1}\left( M \right)_{j,l+1}^{\eta_{j+1,l}\eta_{j+2,l}}
		d_{j+1}\left( M \right)_{j+1,l} &= 
		m_{j,l+2}^{\eta_{j+1,l}\eta_{j+2,l}}
		m_{j+1,l+2}^{\eta_{j+2,l+1}} m_{j+2,l+2},	
	\end{align*}
	which is equal to \eqref{eq14}.
	Therefore $d_jd_{j+1}\left( M \right) = d_j^2\left( M \right)$. 

Now  the $j$-th row of $d_js_j\left( M \right)$ is obtained from 
the $j$-th and $\left( j+1 \right)$-st rows of $s_j\left( M \right)$:
$$
\left( 
\begin{array}{ccccccccccc}
	\varnothing & \dots & \varnothing & m_{j,j} & e_{x} & m_{j,j+1} &
	\dots & m_{j,n} \\
	\varnothing & \dots & \varnothing & \varnothing & 1_{x} &
	e_{x} & \dots & e_{x}
\end{array}
\right),
$$
where $x= x_j\left( M \right)$. 
We see that the corresponding sequence of $\eta$'s consist from $1_{x}$
repeated the required number of times. Now
\begin{align*}
	m_{j,j} \partial\left( e_{x} \right) 1_{x}& = m_{j,j}\\
	m_{j,l}^{1_{x}} e_{x} &  =  m_{j,l}  & l&\ge j+1.
\end{align*}
Therefore $d_js_j\left( M \right) = M$. 

The $j$-th row of $d_j s_{j-1}\left( M \right)$ is obtained from the $j$-th and
$\left( j+1 \right)$-st rows of $s_{j-1}\left( M \right)$:
$$
\left( 
\begin{array}{ccccccccccc}
	\varnothing& \dots & \varnothing & 1_{x} & e_x & e_x & \dots & e_x\\
	\varnothing & \dots & \varnothing & \varnothing & m_{j,j} & 
	m_{j,j+1} & \dots & m_{j,n}
\end{array}
\right), 
$$
where $x = x_{j-1}\left( M \right)$. The required sequence of $\eta$'s is the
$j$-th sequence of $\eta$'s for $M$. Now
\begin{align*}
	1_x \partial\left( e_x \right) m_{j,j} & = m_{j,j}\\
	e_x^{\eta_{j,l}} m_{j,l} & = e_{x_{j}} m_{j,l} = m_{j,l} \mbox{ for
	$l>j$.  } 
\end{align*}
Therefore $d_js_{j-1}\left( M \right) = M$. 

Finally we consider the $j$-th row of $d_js_{j+1}\left( M \right)$ and
$s_jd_j\left( M \right)$.
The $j$-th row of the second matrix is obtained from the $j$-th row of
$d_j\left( M \right)$ by inserting $e_x$, $x= x_{j}\left( d_j\left( M
\right) \right) = x_{j+1}\left( M \right)$, at the place $j+1$. The $j$-th row
of $d_js_{j+1}\left( M \right)$ is obtained from the $j$-th and $\left( j+1
\right)$-st rows of $s_{j+1}\left( M \right)$:
	$$
		\left(
		\begin{array}{cccccccccc}
		\varnothing& \dots& \varnothing& m_{j,j}& m_{j,j+1}&
	e_{x_j}& 	m_{j,j+1 }& \dots& m_{j,n}\\
		\varnothing & \dots & \varnothing & \varnothing & m_{j+1,j+1} &
		 e_{x_{j+1} } &  m_{j+1,j+2}& \dots & m_{j+1,n}
	\end{array}	
		\right)
$$
We see that the corresponding sequence of $\eta$'s is obtained from the
$(j+1)$-st sequence of $\eta$'s for $M$ by repeating $\eta_{j+1,j+1}$ twice. It
is straightforward not that the $j$-th row of $d_js_{j+1}\left( M \right)$ is
obtained from the $j$-th row of $d_j\left( M \right)$ by inserting
$e_{x_{j+1}}$ at the place $j+1$. Thus $d_js_{j+1}\left( M \right) =
s_jd_j\left( M \right)$. 
\end{proof}
\section{Coskeletal property }
\label{coskeletal}
In this section we investigate coskeletality of $N\left( A,\cat \right)$ for a
given crossed monoid $\left( A,\cat \right)$. 
For every $n\ge 2$ we denote by $\widetilde{N}_n\left( A,\cat \right)$ the set
of triples $\left( M^0,M^n,m \right)$, where $M^0$, $M^n\in N_{n-1}\left( A,\cat
\right)$, $m\in A\left(s\left( m^0_{11}\right) \right)$ are such that
$d_{n-1} M^0 = d_0 M^n$. We have an obvious map
\begin{align*}
	{\lambda}_n \colon N_n\left( A,\cat \right) &\to \widetilde{N}_n\left(
	A,\cat \right)\\
	M &\mapsto \left( d_0M, d_n M, m_{1n} \right).
\end{align*}
The map is a bijection and we will denote the inverse of $\lambda_n$ by
$\mu_n$. The following picture explains how to construct $\mu_n\left( M^0,M^n,m
\right)\in N_n\left( A,\cat \right)$ for $\left( M^0,M^n, m \right)\in
\widetilde{N}_n\left( A,\cat \right)$: 
$$
\raisebox{-5em}{$\mu\left( M^0,M^n,m \right) =$\phantom{a;dj}     } \xy (25,-9)*{ {}_{d_0M^n = d_nM^0}};(39,-3)*{m};(6,0);(36,0)**\dir{-};(36,-6)**\dir{-};(42,-6)**\dir{-};
(42,-36)**\dir{-};(36,-36)**\dir{-};(36,-30)**\dir{-};(30,-30)**\dir{-};
(30,-24)**\dir{-};(24,-24)**\dir{-};(24,-18)**\dir{-};(18,-18)**\dir{-};
(18,-12)**\dir{-};(12,-12)**\dir{-};(12,-6)**\dir{-};(6,-6)**\dir{-};
(6,0)**\dir{-};(12,-6);(36,-6)**\dir{-};(36,-30)**\dir{-};
(51,-21)*{M^0};(22,9)*{M^n};\ar@{->}(21,0);(21,7);\ar@{->}(48,-21);(42,-21)
\endxy
$$
Now we investigate the effect of applying $d_j$ to $\mu_n\left( M^0,M^n,m
\right)$. For $j=0$ and $j=n$ we have by definition
\begin{align*}
	d_0\mu_n\left( M^0,M^n,m \right) &= M^0\\
	d_n \mu_n\left( M^0,M^n,m \right) & = M^n.
\end{align*}
Now for $1\le j\le n-1$:
\begin{align*}
	d_0d_j\mu_n\left( M^0,M^n,m \right) & = d_{j-1}d_0 \mu\left( M^0,M^n,m
	\right) = d_{j-1}M^0\\
	d_{n-1}d_j\mu_n\left( M^0,M^n,m \right) & = d_jd_{n}\mu\left( M^0,M^n,m
	\right) = d_jM^n.
\end{align*}
Therefore $d_j\mu_n\left( M^0,M^n,m \right) = \mu_{n-1}\left( d_{j-1}M^0,d_j
M^n,m' \right)$, where $m'$ is the element at the north-east corner of
$d_j\left( M^0,M^n,m \right)$. If $2\le j\le n-2$, then $m'=m$. For $j=1$,
$n-1$ it looks more complicated. Namely, for $j=1$ we get
\begin{align}
	\label{eq:mprime1}
	m' = m^{m^0_{11}\partial\left( m^0_{12}\dots m^0_{1,n-2} \right)}
	m^0_{1,n-1}
\end{align}
	and for $j=n-1$
	\begin{align}
		\label{eq:mprime2}
		m' = m^n_{1,n-1} m, 
	\end{align}
		where $m^0_{i,j}$ and $m^n_{i,j}$ are the entries of $M^0$ and $M^n$,
respectively.
\begin{theorem}
	\label{coskeletal1}
	Let $\left( A,\cat \right)$ be a crossed monoid. Then the simplicial set
	$N\left( A,\cat \right)$ is $4$-coskeletal.
\end{theorem}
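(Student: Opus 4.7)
The plan is to apply Proposition~\ref{coskelet}, which reduces the problem to showing that the boundary map $b_N\colon N_N(A,\cat)\to \bigwedge^N N(A,\cat)$ is bijective for every $N>4$. The whole argument is controlled by the bijection $\lambda_N$ from the preceding discussion together with the observation $d_j\mu_N(M^0,M^n,m) = \mu_{N-1}(d_{j-1}M^0,d_jM^n,m)$ valid for $2\le j\le N-2$, which shows that the north-east entry $m_{1N}$ of $M$ equals the north-east entry of every ``middle'' face $d_jM$. For $N\ge 5$ the range $[2,N-2]$ is nonempty, and injectivity is then immediate: if $b_N(M)=b_N(M')$ then $d_0M=d_0M'$, $d_NM=d_NM'$, and $m_{1N}=(d_2M)_{1,N-1}=m'_{1N}$, so $\lambda_N(M)=\lambda_N(M')$ and $M=M'$.

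For surjectivity, given a compatible tuple $(y_0,\dots,y_N)\in \bigwedge^N N(A,\cat)$, I would set $m:=(y_2)_{1,N-1}$ and $M:=\mu_N(y_0,y_N,m)$, admissible because $d_{N-1}y_0=d_0y_N$ by compatibility, and then check $d_jM=y_j$ for every $j$. The cases $j=0,N$ hold by construction. For $2\le j\le N-2$ the compatibilities $d_0y_j=d_{j-1}y_0$ and $d_{N-1}y_j=d_jy_N$ force $d_jM$ and $y_j$ into the same fibre of the first two components of $\lambda_{N-1}$, so it suffices to match their north-east entries; the equality $(y_j)_{1,N-1}=(y_2)_{1,N-1}=m$ for $3\le j\le N-2$ follows from the compatibility $d_2y_j=d_{j-1}y_2$ together with the elementary observation that a face map $d_i$ applied to a matrix in $N_{N-1}$ preserves the north-east entry whenever $2\le i\le N-3$.

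The hard part will be the two extreme intermediate indices $j=1$ and $j=N-1$, where by \eqref{eq:mprime1} and \eqref{eq:mprime2} the north-east entry of $d_jM$ is twisted by the action \eqref{cr2} and multiplied by a boundary entry of $y_0$ or $y_N$. For $j=1$ my plan is to use the compatibility $d_1y_3=d_2y_1$: the right-hand side preserves the north-east entry, yielding $(d_2y_1)_{1,N-2}=(y_1)_{1,N-1}$, while the left-hand side unfolds by the definition of $d_1$ into a twisted product involving $(y_3)_{1,N-1}=m$ and the second row of $y_3$. The further compatibility $d_0y_3=d_2y_0$ then rewrites the second row of $y_3$ in terms of the first row of $y_0$; crucially, the combined entry $(y_3)_{2,3}=(y_0)_{1,2}(y_0)_{1,3}$ absorbs the two columns that $d_2$ has multiplied, so that $\partial\bigl((y_3)_{2,3}\cdots (y_3)_{2,N-2}\bigr)$ telescopes into $\partial\bigl((y_0)_{1,2}\cdots (y_0)_{1,N-2}\bigr)$, and one recovers exactly the formula \eqref{eq:mprime1}. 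The case $j=N-1$ is symmetric, using $d_{N-3}y_{N-1}=d_{N-2}y_{N-3}$ together with $d_{N-3}y_N=d_{N-1}y_{N-3}$ to reduce to \eqref{eq:mprime2}.
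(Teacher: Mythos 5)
Your proposal is correct and follows essentially the same route as the paper: injectivity via the factorization $\lambda_N=\nu_N b_N$ with $m=(y_2)_{1,N-1}$, and surjectivity by checking $d_j\mu_N(y_0,y_N,m)=y_j$ case by case, using corner-preservation of the middle face maps for $2\le j\le N-2$ and the formulas \eqref{eq:mprime1}, \eqref{eq:mprime2} for $j=1,N-1$. The only (harmless) deviation is that for $j=N-1$ you invoke the compatibilities $d_{N-3}y_N=d_{N-1}y_{N-3}$ and $d_{N-3}y_{N-1}=d_{N-2}y_{N-3}$ where the paper uses $d_2M^N=d_{N-1}M^2$ and $d_2M^{N-1}=d_{N-2}M^2$; both reduce the claim to the same telescoping identity.
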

\begin{proof}
	Let $n\ge 5$. We have to check that $b^n\colon N_n\left( A,\cat
	\right) \to \bigwedge^n N\left( A,\cat \right)$ is a bijection. Define
	the map $\nu_n\colon \bigwedge^n N\left( A,\cat \right)\to
	\widetilde{N}_n\left( A,\cat \right)$ by 
	\begin{align}	
		\label{eq:nu}
	\nu_n\colon 	\left( M^0,\dots,M^n \right) \mapsto \left( M^0,M^n,
	m^2_{1,n-1}
	\right),
	\end{align}
	where $m^2_{1,n-1}$ is the element of $M^2$ at the upper-right corner.
	We get a commutative triangle
	$$
	\xymatrix{N_n\left( A,\cat \right)\ar[r]^{b_n} \ar[rd]_{\lambda_n} &
	\bigwedge^n N\left( A,\cat \right)\ar[d]^{\nu_n}  \\
	&\widetilde{N}_n\left( A,\cat  \right).}
	$$
	Since $\lambda_n$ is a bijection it follows that $b_n$ is injective. Now
	for $\left( M^0,\dots,M^n \right)\in \bigwedge^n N\left( A,\cat
	\right)$ we define $M = \mu_n\nu_n\left( M^0,\dots,M^n \right)$. We
	claim that $b_n\left( M \right) = \left( M^0,\dots,M^n \right)$. In fact
	\begin{align}
		\nonumber
		d_0 M & = d_0\mu_n\left( M^0,M^n, m^2_{1,n-1} \right) = M^0\\
\nonumber
		d_n M & = d_n \mu_n\left( M^0,M^n, m^2_{1,n-1} \right) = M^n\\
		d_j M & = d_j \mu_n\left( M^0,M^n, m^2_{1,n-1} \right) =
		\mu_{n-1}\left( d_{j-1}M^0, d_jM^n, m' \right)\nonumber\\& = \mu_{n-1}
		\left( d_0M^j, d_{n-1}M^j,m' \right).\label{eq:mprime3}
	\end{align}
	If $2\le j\le n-2$, then $m'=m^2_{1,n-1}$. If $j=2$ then
	$$
	d_2M = \mu_{n-1}\left( d_0M^2, d_{n-1}M^{2}, m^2_{1,n-1} \right) =
	\mu_{n-1}\lambda_{n-1}M^2 = M^2. 
	$$
	If $3\le j\le n-2$, then the element of $d_{j-1}M^2$ at the right-upper
	corner is the same as for $M^2$, and similarly for $d_2M^{j-1}$ and
	$M^{j-1}$. As  
	$d_jM^2 = d_2M^{j-1}$ we get $m^2_{1,n-1} = m^j_{n-1}$ and therefore
	$$
	d_j\left( M \right) = \mu_{n-1}\left( d_0M^j,d_{n-1}M^j, m^j_{1,n-1}
	\right) = M^j.
	$$
	For $j=n-1$ the element $m'$ in \eqref{eq:mprime3} is
	$m^n_{1,n-1}m^2_{1,n-1}$ by \eqref{eq:mprime2}.
Now 
\begin{align*}
	m^n_{1,n-1} m^2_{1,n-1} & = \left( d_2M^n \right)_{1,n-2}m^2_{1,n-1}\\
	& = \left( d_{n-1}M^2 \right)_{1,n-2} m^2_{1,n-1} \\
	&= m^2_{1,n-2}m^2_{1,n-1}\\
	& = \left( d_{n-2}M^2 \right)_{1,n-2} = \left( d_2M^{n-1}
	\right)_{1,n-2} = m^{n-1}_{1,n-1}.
\end{align*}
Note that in the first step we used $n-1\ge 4$ which is equivalent to our assumption
$n\ge 5$.
Combining with \eqref{eq:mprime3} we get
\begin{align*}
	d_{n-1}M = \mu_{n-1}\left( d_0M^{n-1},d_{n-1}M^{n-1},m^{n-1}_{1,n-1}
	\right) = M^{n-1}.
\end{align*}

	For $j=1$ the element $m'$ in \eqref{eq:mprime3} is given by
	\eqref{eq:mprime1}: 	
	\begin{align}	
		\label{eq:mprime4}
		m' = \left({m^2_{1,n-1}}\right)^{m^0_{11}\partial\left(
		m^0_{12}\dots m^0_{1,n-2} \right)}
		m^0_{1,n-1}.
	\end{align}
	We have to show that this product is equal to $m^1_{1,n-1}$. We have
	\begin{align*}
		m^1_{1,n-1}&\stackrel{n-1\ge 4}{=\!=\!=\!=} \left( d_2M^1 \right)_{1,n-2}
		= \left( d_1M^3 \right)_{1,n-2}\\
		\\&= \left( m^3_{1,n-1} \right)^{m^3_{22}\partial\left(
		m^3_{23},\dots m^3_{2,n-2}
		\right)}m^3_{2,n-1}.
	\end{align*}
	This formula already looks similar to \eqref{eq:mprime4}. It is only
	left to identify the elements in both formulas. 
	For $2\le i\le n-1$ we have
	\begin{align*}
		m^3_{2,i} = \left( d_0 M^3 \right)_{1,i-1} = \left( d_2M^0
		\right)_{1,i-1} = 
		\begin{cases}
			m^0_{11} &,i=2\\
			m^0_{12}m^0_{13}&,i=3\\
			m^0_{1i} &, i\ge 4.
		\end{cases}
	\end{align*}
In particular
$$
m^3_{22}\partial\left( m^3_{23}\dots m^3_{2,n-2} \right) =
m^0_{11}\partial\left( m^0_{12}\dots m^0_{1,n-2} \right). 
$$
Thus it is left to show $m^3_{1,n-1} = m^2_{1,n-1}$. This follows from
$$
m^3_{1,n-1} = \left( d_2M^3 \right)_{1,n-2} = \left( d_2M^2 \right)_{1,n-2} =
m^2_{1,n-1}.
$$
Finally $d_{n-1}M= \mu_{n-1}\left( d_0M^{n-1},d_{n-1}M^{n-1},m^{n-1}_{1,n-1}
\right) = M^{n-1}$.
\end{proof}

\begin{theorem}
	\label{coskeletal2}
	Let $\left( A,\cat \right)$ be a crossed monoid such that 
	\begin{itemize}
		\item for every object $t\in\cat$ the monoid $A\left( t
			\right)$ has left and right cancellation properties;
		\item for every morphism $\gamma \in \cat$ the map $a\mapsto
			a^{\gamma}$ from $A\left( t\left( \gamma \right)
			\right)$ to $A\left( s\left( \gamma \right) \right)$ is
			injecive.
	\end{itemize}
	  Then
	$N\left( A,\cat \right)$ is $3$-coskeletal
\end{theorem}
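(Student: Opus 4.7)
Since Theorem~\ref{coskeletal1} already yields that $N(A,\cat)$ is $4$-coskeletal, Proposition~\ref{coskelet} reduces the task to showing that $b_4\colon N_4(A,\cat)\to\bigwedge^4 N(A,\cat)$ is a bijection. The plan is to follow the template of the proof of Theorem~\ref{coskeletal1}: define $\nu_4\colon \bigwedge^4 N\to\widetilde{N}_4$ by $(M^0,\dots,M^4)\mapsto (M^0,M^4,m^2_{1,3})$. Then $\lambda_4 = \nu_4 b_4$, so injectivity of $b_4$ follows from that of $\lambda_4$. For surjectivity I set $M := \mu_4\nu_4(M^0,\dots,M^4)$ and verify $d_j M = M^j$ for every $j$; the cases $j=0,4$ hold by definition, and $j=2$ reduces, as in Theorem~\ref{coskeletal1}, to the horn identities $d_1 M^0 = d_0 M^2$ and $d_3 M^2 = d_2 M^4$.

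The new difficulty lies in the cases $j=1$ and $j=3$. The identification $m^n_{1,n-1} = (d_2 M^n)_{1,n-2}$ used in Theorem~\ref{coskeletal1} requires $n-1\ge 4$; for $n=4$ one has instead $(d_2 M^4)_{1,2} = m^4_{12}m^4_{13}$, so the two identities
\begin{align*}
\text{(A)}\quad & m^3_{1,3} = m^4_{1,3}\, m^2_{1,3}, \\
\text{(B)}\quad & m^1_{1,3} = (m^2_{1,3})^{m^0_{11}\partial(m^0_{12})}\, m^0_{1,3}
\end{align*}
must be deduced directly from the horn relations. For (A), reading off the $(1,2)$-entries of the three identities $d_3 M^4 = d_3 M^3$, $d_2 M^4 = d_3 M^2$, and $d_2 M^3 = d_2 M^2$ yields $m^4_{12}=m^3_{12}$, $m^4_{12}m^4_{13}=m^2_{12}$, and $m^3_{12}m^3_{13}=m^2_{12}m^2_{13}$; combining these gives $m^3_{12}m^3_{13} = m^3_{12}\,m^4_{13}\,m^2_{13}$, and left cancellation in $A(x_1)$ produces (A).

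For (B) the argument is analogous but more intricate. From $d_1 M^3 = d_2 M^1$ and $d_1 M^4 = d_3 M^1$, combined with $d_0 M^k = d_{k-1} M^0$ for $k=3,4$, one expresses $m^1_{12}m^1_{13}$ and $m^1_{12}$ in terms of entries of $M^0$ together with $m^3_{1,3}$ and $m^4_{1,3}$. Substituting (A) and then applying the crossed monoid axiom \eqref{cr3} in the form $(m^2_{1,3})^{m^0_{11}} m^0_{12} = m^0_{12}\,(m^2_{1,3})^{m^0_{11}\partial(m^0_{12})}$ brings the equation to $m^0_{12}\cdot m^1_{1,3} = m^0_{12}\cdot (m^2_{1,3})^{m^0_{11}\partial(m^0_{12})}\, m^0_{1,3}$, from which two left cancellations (first of $(m^4_{1,3})^{m^0_{11}}$ in $A(x_2)$, then of $m^0_{12}$ in $A(x_2)$) yield (B). The main obstacle is precisely this bookkeeping: one must manufacture the twisted action by $m^0_{11}\partial(m^0_{12})$ from untwisted products using \eqref{cr3} while keeping track of which monoid each cancellation takes place in. The hypotheses of right cancellation and of injectivity of each action $a\mapsto a^\gamma$ remain available for alternative rearrangements, but the route sketched above relies only on left cancellation in $A(x_1)$ and $A(x_2)$.
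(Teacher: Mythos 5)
Your proposal is correct, and its overall skeleton coincides with the paper's: reduce via Theorem~\ref{coskeletal1} and Proposition~\ref{coskelet} to the bijectivity of $b_4$, get injectivity from $\lambda_4=\nu_4 b_4$, set $M=\mu_4\nu_4(M^0,\dots,M^4)$, and check $d_jM=M^j$ case by case, with $j=0,2,4$ handled exactly as in the paper. Your identity (A) for $j=3$ is verbatim the paper's argument (its equalities are precisely the $(1,2)$-entries of $d_2M^3=d_2M^2$, $d_3M^3=d_3M^4$, $d_3M^2=d_2M^4$, followed by left cancellation of $m^3_{12}$ in $A(x_1)$). Where you genuinely diverge is (B), the case $j=1$. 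The paper compares the $(1,1)$- and $(1,2)$-entries of $d_1M^1=d_1M^2$, $d_0M^2=d_1M^0$, $d_0M^1=d_0M^0$, arriving at $\left(m^1_{13}\right)^{m^0_{22}}m^0_{23}=\left(\left(m^2_{13}\right)^{m^0_{11}\partial\left(m^0_{12}\right)}m^0_{13}\right)^{m^0_{22}}m^0_{23}$, and then must invoke \emph{right} cancellation (to strip $m^0_{23}$) and \emph{injectivity of the action} (to strip the exponent $m^0_{22}$). You instead use the horn relations $d_1M^3=d_2M^1$, $d_1M^4=d_3M^1$, $d_0M^3=d_2M^0$, $d_0M^4=d_3M^0$, substitute the already-proved (A), and apply \eqref{cr3} once to commute $\left(m^2_{13}\right)^{m^0_{11}}$ past $m^0_{12}$; I verified that this yields $m^1_{12}\,m^1_{13}=m^1_{12}\,\left(m^2_{13}\right)^{m^0_{11}\partial\left(m^0_{12}\right)}m^0_{13}$ in $A(x_2)$, so two left cancellations finish the job. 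The net effect is that your argument uses only left cancellation in the monoids $A(t)$ and never needs right cancellation or injectivity of the maps $a\mapsto a^\gamma$, so it actually proves the theorem under strictly weaker hypotheses than those stated; the price is that (B) now depends on (A), whereas in the paper the two cases are independent.
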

\begin{remark}
	Note that crossed modules satisfy the conditions of the theorem. 
\end{remark}
\begin{proof}
	We already saw in Theorem~\ref{coskeletal1} that $N\left( A,\cat
	\right)$ is $ 4$-coskeletal. Therefore it is enough to show that
	$b_4\colon N_4\left( A,\cat \right)\to \bigwedge^4 N\left( A,\cat
	\right)$ is a bijection. We define the map $\nu_4\colon
	\bigwedge^4N\left( A,\cat \right)\to \widetilde{N}_4\left( A,\cat
	\right)$ by \eqref{eq:nu}. Then $\lambda_4= \nu_4 b_4$ is a bijection.
	Therefore $b_4$ is injective. For $\left( M^0,\dots,M^4 \right)$ we
	define $M= \mu_4\nu_4\left( M^0,\dots,M^4 \right)$. In the same way as
	in the proof of Theorem~\eqref{coskeletal1} we get $d_0 M = M^0$,
	$d_4M= M^4$ and $d_2M^2 = M^2$. Now by \eqref{eq:mprime3} and
	\eqref{eq:mprime2} we get
	$$
	d_3M = \mu_3\left( d_0M^3,d_1M^3,m^4_{13}m^2_{13} \right).
	$$
	To get $d_3M = M^3$ we have to show that $m^4_{13}m^2_{13} =
	m^3_{13}$.
	We have the  following equalities
	\begin{align}
		m^3_{12}m^3_{13} & = \left( d_2M^3 \right)_{12} = \left( d_2M^2
		\right)_{12} = m^2_{12}m^2_{13}\label{eq:case431}\\
		m^3_{12}& = \left( d_3M^3 \right)_{12} = \left( d_3M^4
		\right)_{12} = m^4_{12}\label{eq:case432}\\
		m^2_{12}& = \left( d_3M^2 \right)_{12} = \left( d_2M^4
		\right)_{12} = m^4_{12} m^4_{13}.\label{eq:case433}
	\end{align}
	Therefore
	$$
	m^3_{12}m^3_{13} \stackrel{\eqref{eq:case431}}{\Relbar\!\Relbar}
	m^2_{12}m^2_{13} \stackrel{\eqref{eq:case433}}{\Relbar\!\Relbar}
	m^4_{12}m^4_{13}m^2_{13}  \stackrel{\eqref{eq:case432}}{\Relbar\!\Relbar}
	m^3_{12}m^4_{13}m^2_{13}
	$$
	and by left cancellation for $A\left( x_1\left( M \right) \right)$ we
	get
	$m^3_{13} = m^4_{13}m^2_{13}$ as required.
	
	Now by \eqref{eq:mprime3} and \eqref{eq:mprime1} we get $$d_1M =
	\mu_{n-1}\left( d_0M^1,d_3M^1, \left( m^2_{13}
	\right)^{m^0_{11}\partial\left( m^0_{12} \right)} m^0_{13} \right).$$
	To prove $d_1M= M^1$ it is enough to check that $m^1_{13}= \left( m^2_{13}
	\right)^{m^0_{11}\partial\left( m^0_{12} \right)} m^0_{13}$.	
	We have the equalities
	\begin{align}
		\label{eq:case411}
		\left( m^1_{13} \right)^{m^1_{22} } m^1_{23} & = \left( d_1M^1
		\right)_{12} = \left( d_1M^2 \right) = \left( m^2_{13}
		\right)^{m^2_{22}}m^2_{23}\\
		\label{eq:case412}		m^2_{23}& = \left( d_0M^2 \right)_{12} = \left( d_1M^0
		\right)_{12} = \left( m^0_{13} \right)^{m^0_{22}}m^0_{23}\\
		\label{eq:case413}
		m^1_{23} &= \left( d_0M^1 \right)_{12} = \left( d_0M^0
		\right)_{12} = m^0_{23}\\
		\label{eq:case414}
		m^2_{22} &= \left( d_0M^2 \right)_{11} = \left( d_1M^0
		\right)_{11} = m^0_{11}\partial\left( m^0_{12}
		\right)m^0_{22}\\
		\label{eq:case415}		m^1_{22} &= \left( d_0M^1 \right)_{11} = \left( d_0M^0
		\right)_{11} = m^0_{22}.
	\end{align}
	We get
	\begin{align*}	
		\left( m^1_{13} \right)^{m^0_{22}} m^0_{23}
	&	\stackrel{\eqref{eq:case415},\eqref{eq:case413}}{\Relbar\!\Relbar\!\Relbar\!\Relbar}
		\left( m^1_{13} \right)^{m^1_{22}} m^1_{23}
		\stackrel{\eqref{eq:case411}}{\Relbar\!\Relbar}
		\left( m^2_{13} \right)^{m^2_{22}} m^2_{23}
	\\[3ex]&	\stackrel{\eqref{eq:case414},\eqref{eq:case412}}{\Relbar\!\Relbar\!\Relbar\!\Relbar}
		\left( m^2_{13} \right)^{m^0_{11}\partial\left( m^0_{12} \right)
		m^0_{22}}
		\left( m^0_{13} \right)^{m^0_{22}}m^0_{23} 
\\[2ex]&	=
	\left( \left( m^2_{13} \right)^{m^0_{11}\partial\left( m^0_{12}
	\right)}m^0_{13} \right)^{m^0_{22}} m^0_{23}.
\end{align*}
Now from the right cancellation property for $A\left( x_1\left(M \right)
\right)$ and injectivity of the action of $\cat$ we obtain
		 $$m^1_{13} = \left(
		m^2_{13} \right)^{m^0_{11}\partial\left( m^0_{12}
		\right)}m^0_{13}$$ as required.
\end{proof}

\section{Kan property}
\label{kan}
Recall that the nerve $N\left( \cat \right)$ of a category $\cat$ is a Kan
simplicial set if and only if $\cat$ is a groupoid. In this section we prove
that the nerve $N\left( A,\cat \right)$ of a crossed monoid $\left( A,\cat
\right)$ is a Kan complex if and only if $\left( A,\cat \right)$ is a crossed
module.

Suppose $\left( A,\cat \right)$ is crossed module. Then by Proposition~\ref{coskeletal2} the set $N\left( A,\cat \right)$ is
	$3$-coskeletal. Therefore for $n\ge 5$ by Corollary~\ref{criterion}
	the maps $b^n_j$ are sujective. Now $N\left( \cat \right)$ can be
	embedded into $N\left( A,\cat \right)$ by putting appropriate units over
	the diagonal. At levels $0$ and $1$ this embedding is a bijection.
	As $N\left( \cat \right)$ is a Kan complex we get that the Kan condition
	holds for $N\left( A,\cat \right)$  at degrees $0$ and $1$. 
	Moreover, $N\left( \cat \right)\hookrightarrow N\left( A,\cat \right)$
	induces the  isomorphisms between sets $\bigwedge^2_j N\left( \cat
	\right)$ and $\bigwedge^2_j N\left( A,\cat \right)$, $0\le j\le 2$. As $N_2\left(  \cat
	\right)$ is a subset of $N_2\left(A,\cat  \right)$ and the restriction
	of $b^2_j\colon N_2\left( A,\cat \right)\to \bigwedge^2N\left( A,\cat
	\right)$ to $N_2\left( \cat \right)$ coincide with $b^2_j\colon N_2\left( \cat \right)\to
	\bigwedge^2_j N_2\left( \cat \right)$ we get the Kan property at
level $2$. 
Thus only the sujectivity of maps $b^3_j\colon N_3\left( A,\cat \right)\to
\bigwedge^3 N\left( A,\cat \right)$, $0\le j\le 3$, and $b^4_j\colon N_4\left(
A,\cat \right)\to \bigwedge^4 N\left( A,\cat \right)$, $0\le j\le 4$, should be
checked in order to show that $N\left( A,\cat \right)$ is a Kan simplicial set. 

To deal with this problem we start by description of the image of $b_3\colon N_3\left( A,\cat \right)\to \bigwedge^3
N\left( A,\cat \right)$.
\begin{proposition}
	\label{image}
	Let $\left( A,\cat \right)$ be a crossed module.
	Then $\left( M^0,M^1,M^2,M^3 \right)$ from $\bigwedge^3 N\left( A,\cat
	\right)$ lies in the image of 
	$b^3$ if and only if
	\begin{equation}	
		\label{eq:image}
		\left( m^3_{12} \right)^{m^3_{22}} m^1_{12} =
		\left( m^2_{12} \right)^{m^3_{22}} m^0_{12}.
	\end{equation}
	\end{proposition}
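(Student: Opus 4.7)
The plan is to use the bijection $\lambda_3\colon N_3(A,\cat) \to \widetilde{N}_3(A,\cat)$, which represents any $M \in N_3(A,\cat)$ as $\mu_3(M^0, M^3, m)$ for some $m \in A$. Finding a filler for the horn $(M^0, M^1, M^2, M^3)$ then reduces to choosing $m$ so that the remaining two face conditions $d_1 M = M^1$ and $d_2 M = M^2$ hold.

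For necessity, I would unpack the definitions of $d_1$ and $d_2$ directly on $M = \mu_3(M^0, M^3, m)$ to obtain
\begin{align*}
d_1 M &= \begin{pmatrix} m^3_{11}\partial(m^3_{12}) m^3_{22} & m^{m^3_{22}}\, m^0_{12} \\ & m^0_{22} \end{pmatrix}, \\
d_2 M &= \begin{pmatrix} m^3_{11} & m^3_{12}\, m \\ & m^3_{22}\partial(m^0_{12}) m^0_{22} \end{pmatrix}.
\end{align*}
Every entry of these two matrices except the two $(1,2)$ positions already agrees with the corresponding entry of $M^1$ or $M^2$ by the horn identities $d_j M^k = d_{k-1} M^j$ (the $(2,2)$-entry of $d_2 M$ additionally uses \eqref{cr2}). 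Thus the face conditions collapse to the pair of scalar equations
$$
m^{m^3_{22}}\, m^0_{12} = m^1_{12}, \qquad m^3_{12}\, m = m^2_{12}.
$$
Since $A(x)$ is a group in the crossed module setting, the second equation uniquely determines $m = (m^3_{12})^{-1} m^2_{12}$; substituting into the first and using that the action $(\cdot)^{m^3_{22}}$ is a group homomorphism yields exactly \eqref{eq:image}.

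For sufficiency I would reverse the argument: given \eqref{eq:image}, set $m := (m^3_{12})^{-1} m^2_{12}$ and $M := \mu_3(M^0, M^3, m)$. Then $d_0 M = M^0$ and $d_3 M = M^3$ hold by the construction of $\mu_3$, while the formulas above show that $d_1 M = M^1$ and $d_2 M = M^2$ reduce to the two scalar equations—the second holding by the choice of $m$, and the first being equivalent to \eqref{eq:image} via the same manipulation.

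The main technical hurdle will be the bookkeeping in the first step: justifying the explicit formulas for $d_1 M$ and $d_2 M$ above and verifying that every entry save the two $(1,2)$ ones is already forced by the horn identities alone. Once those formulas are in place, the remaining manipulation using the group structure of $A(x)$ and the crossed module axiom \eqref{cr2} is a short calculation.
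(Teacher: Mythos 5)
Your proposal is correct and follows essentially the same route as the paper: represent the filler as $\mu_3(M^0,M^3,m)$, observe that all entries except the two $(1,2)$-entries of $d_1M$ and $d_2M$ are forced by the simplicial-kernel identities, and solve the resulting two scalar equations for $m=(m^3_{12})^{-1}m^2_{12}$, which turns the remaining condition into \eqref{eq:image}. The only differences are cosmetic: the paper proves the ``only if'' direction by writing out $b_3(M)$ directly (so that it holds for arbitrary crossed monoids, without inverses), and the $(2,2)$-entry of $d_2M$ needs only the horn identity $d_0M^2=d_1M^0$ rather than \eqref{cr2}.
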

\begin{proof}
	The only if part is true for an arbitrary crossed monoid $\left( A,\cat
	\right)$. In fact, let $M\in N_3\left( A,\cat \right)$, then
\begin{multline}	
	b_3\left( M \right) = \left( 
	\left( 
	\begin{array}{cc}
		m_{22} & m_{23}\\
		& m_{33}
	\end{array}
	\right), 
	\left( 
	\begin{array}{cc}
		m_{11}\partial\left( m_{12} \right) m_{22} & \left( m_{13}
		\right)^{m_{22}} m_{23}\\
		& m_{33}
	\end{array}
	\right),\right.\\ \left.
	\left( 
	\begin{array}{cc}
		m_{11} & m_{12}m_{13}\\
		& m_{22}\partial\left( m_{23} \right) m_{33}
	\end{array}
	\right),
	\left( 
	\begin{array}{cc}
		m_{11} & m_{12} \\
		& m_{22}
	\end{array}
	\right)
	\right).
\end{multline}	
Therefore
\begin{align*}
	\left( \left( d_3M \right)_{12} \right)^{\left( d_3M \right)_{22}}
	\left( d_1M
	\right)_{12} &  = \left( m_{12} \right)^{m_{22}} \left( m_{13}
	\right)^{\left(
	m_{22}
	\right)} m_{23}\\[2ex]& = \left( m_{12}m_{13} \right)^{m_{22}} m_{23} = \left(
	\left(
	d_2M \right)_{12}
	\right)^{\left( d_3M \right)_{22}} \left( d_0M \right)_{12}.
\end{align*}
	Now suppose that $\left( A,\cat \right)$ 
	is a crossed module and $\left( M^0,\dots,M^3 \right)\in \bigwedge^3 N\left(
	A,\cat \right)$ satisfies \eqref{eq:image}. We define $M:=\mu_3\left(
	M^0,M^3, \left( m^3_{12} \right)^{-1}m^2_{12}
	\right)$. Then $d_0M = M^0$ and $d_3M = M^3$. Moreover, by
	\eqref{eq:mprime3} and \eqref{eq:mprime1}
	$$
	d_1M = \mu_2\left( d_0M^1,d_2M^1, \left( \left( m^3_{12}
	\right)^{-1} m^2_{12} \right)^{m^0_{11}}m^0_{12} \right)
	$$
	Since $m^0_{11} = d_2M^0 = d_0M^3 = m^3_{22}$ we get

	$$
	\left( \left( m^3_{12}
	\right)^{-1} m^2_{12} \right)^{m^0_{11}}m^0_{12} = 
\left( \left( m^3_{12}
\right)^{-1} m^2_{12} \right)^{m^3_{22}}m^0_{12}
	\stackrel{\eqref{eq:image}}{\Relbar\!\Relbar} 
	m^1_{11}
	$$
	and therefore $d_1M = \mu_2\left( d_0M^1,d_2M^1, m^1_{12} \right) =
	M^1$. Now by \eqref{eq:mprime3} and \eqref{eq:mprime2}
	$$
	d_2M = \mu_2\left(d_0M^2,d_2M^2, m^3_{12}\left( m^3_{12}
	\right)^{-1} m^2_{12} \right) = \mu_2\left( d_0M^2,d_2M^2,m^2_{12}
	\right) = M^2.
	$$
\end{proof}
Now we can handle Kan property at level $3$ of $N\left( A,\cat \right)$. 
\begin{proposition}
	Let $\left( A,\cat \right)$ be a crossed module. Then for $0\le j\le 3$
	the maps $b^3_j\colon N\left( A,\cat
	\right)\to \bigwedge^3_j N\left( A,\cat \right)$ are sujective. 
\end{proposition}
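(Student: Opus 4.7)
The plan is to invoke Proposition~\ref{image} as the backbone: a tuple $(M^0,M^1,M^2,M^3)\in\bigwedge^3 N(A,\cat)$ lies in the image of $b^3$ precisely when the identity~\eqref{eq:image} holds. So for each $j\in\{0,1,2,3\}$ and each horn $(M^0,\dots,\widehat{M^j},\dots,M^3)\in\bigwedge^3_j N(A,\cat)$, I would construct a single $M^j\in N_2(A,\cat)$ that both completes the horn to an element of $\bigwedge^3 N(A,\cat)$ and satisfies~\eqref{eq:image}; then Proposition~\ref{image} yields an $M\in N_3(A,\cat)$ with $b^3(M)$ equal to the enlarged tuple, so that $b^3_j(M)$ recovers the original horn.

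To construct $M^j$ with entries $m^j_{11}, m^j_{12}, m^j_{22}$, I would first read off the two diagonal morphism entries from the ``pure-morphism'' horn-compatibilities of the form $d_0M^j = d_{j-1}M^0$ and $d_2M^j = d_2M^{j+1}$ (with the obvious adaptation at the extremes $j=0$ and $j=3$); these force $m^j_{11}$ and $m^j_{22}$ to be specific morphisms drawn from the remaining faces. The off-diagonal element $m^j_{12}$ is then defined by solving the equation~\eqref{eq:image} for it. This step relies on the crossed-module hypothesis: $\cat$ is a groupoid, so the action $(\,\cdot\,)^{m^3_{22}}$ is a group isomorphism, and each $A(x)$ is a group, so the other $m^i_{12}$ are invertible. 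For example, when $j=3$ one obtains the closed form
\[
m^3_{12} \;=\; m^2_{12}\cdot\bigl(m^0_{12}(m^1_{12})^{-1}\bigr)^{(m^3_{22})^{-1}},
\]
and the three remaining cases $j=0,1,2$ admit entirely analogous closed-form expressions.

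The non-trivial verification is that the only remaining horn-compatibility, of the form $d_1M^j = d_?M^?$, which translates into an identity
\[
m^j_{11}\,\partial(m^j_{12})\,m^j_{22} \;=\; \text{(a specific morphism determined by the horn data)},
\]
is automatically satisfied by the $m^j_{12}$ just constructed. I would check this by applying $\partial$ to the closed-form expression for $m^j_{12}$, commuting $\partial$ past the actions via axiom~\eqref{cr2}, and then substituting the ``$d_1$-level'' horn-compatibilities (for instance $m^2_{22}=m^0_{11}\partial(m^0_{12})m^0_{22}$ and $m^2_{11}\partial(m^2_{12})m^2_{22}=m^1_{11}\partial(m^1_{12})m^1_{22}$) to collapse both sides to the same expression. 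Both group-invertibility in $A(x)$ and invertibility of the action by $\cat$ enter essentially here, so the crossed-module hypothesis (rather than merely crossed monoid) is genuinely needed.

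The main obstacle is the bookkeeping across the four values of $j$: the index shifts and the source/target matching of the actions vary between cases, and one must carefully track which entries of which $M^i$ play the roles in \eqref{eq:image} after solving for the missing $m^j_{12}$. The underlying crossed-module algebra in each individual case is short; once the $\partial$-identity has been verified in all four cases, Proposition~\ref{image} closes the argument.
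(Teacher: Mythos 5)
Your proposal follows the paper's proof essentially verbatim: the paper likewise fixes the diagonal entries of the missing face $M^j$ from the horn compatibilities for $d_0M^j$ and $d_2M^j$, solves \eqref{eq:image} for the upper-corner entry $m^j_{12}$ (replacing $m^3_{22}$ by $m^0_{11}$ when $j=3$), and then verifies by direct computation with \eqref{cr2} that the one remaining compatibility $d_1M^j = d_jM^2$ (for $j\le 1$) or $d_1M^j = d_{j-1}M^1$ (for $j\ge 2$) holds automatically. The only content you leave implicit is the four explicit $\partial$-computations, which the paper carries out in full and which are routine given your setup.
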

\begin{proof}
	For every $0\le j\le 3$ and $M^*\in \bigwedge^3_j N\left( A,\cat \right)$ we will
	construct $M^j\in N_2\left( A,\cat \right)$ that extends $M^*$ to
	$\left( M^0,M^1,M^2,M^3 \right)\in \mathrm{Im}\left( b^3 \right)$.  The
	diagonal elements of $M^j$ are determined from the equalities
	\begin{align*}
		m^j_{11} = d_2 M^j = 
\begin{cases}
	d_j M^3 & j<3\\
	d_2 M^2 & j=3
\end{cases} = 
\begin{cases}
	m^3_{22} & j=0 \\
	m^3_{11}\partial\left( m^3_{12} \right)m^3_{22} & j=1\\
	m^3_{11} & j=2\\
	m^2_{11} & j=1.
\end{cases}
	\end{align*}
	\begin{align*}
		m^j_{22} = d_0 M^j = 
		\begin{cases}
			d_0 M^1 & j=0\\
			d_{j-1} M^0 & j>0 
		\end{cases} = 
		\begin{cases}
			m^1_{22} & j=0\\
			m^0_{22} & j=1\\
			m^0_{11}\partial\left( m^0_{12} \right) m^0_{22} & j=2\\
			m^0_{11} & j=3.
		\end{cases}
	\end{align*}
	The element at the right upper corner of $M^j$ is uniquely determined
	from \eqref{eq:image}. The care should be taken for $j=3$: in this case
	we replace $m^3_{22}$ in \eqref{eq:image} by $m^0_{11}$. We
	automatically get that 
	\begin{align*}
	d_0M^j = 
	\begin{cases}
		d_0 M^1 & j=0\\
		d_{j-1} M^0 & j>0
	\end{cases} && 
	d_2M^j= 
	\begin{cases}
		d_j M^3 & j<3\\
		d_2 M^2 & j=3.
	\end{cases}
	\end{align*}
	Thus we have only to check that
	\begin{align*}
		d_1M^j = 
		\begin{cases}
			d_j M^2 & j\le 1\\
			d_{j-1} M^1 & j \ge 2.
		\end{cases}
	\end{align*}
	Below is the required computation. For $j=0$ we have:
	\begin{align*}
		 && d_1M^0 & = m^0_{11}\partial\left( m^0_{12} \right)
		m^0_{22} =
		m^3_{22}
	\partial\left(
	\left(
	\left( m^2_{12} \right)^{-1}
		m^3_{12}
		\right)^{m^3_{22}}
		m^1_{12} \right)
		m^1_{22} \\
		&&& = \partial\left( \left( m^2_{12} \right)^{-1} \right)
		\partial\left( m^3_{12} \right) m^3_{22}\,\, \partial\!\left(
		m^1_{12}
		\right) m^1_{22} 
		\\&&& =  \partial\left( \left( m^2_{12} \right)^{-1} \right)
		\left( m^3_{11} \right)^{-1} \left( d_1M^3 \right)
	 \partial\!\left(
		m^1_{12}
		\right) m^1_{22} 
		\\&&&=  \partial\left( \left( m^2_{12} \right)^{-1} \right)
		\left( d_2M^3 \right)^{-1} \left( d_2M^1 \right)
\partial\!\left(
		m^1_{12}
		\right) m^1_{22} 
\\&&& =   \partial\left( \left( m^2_{12} \right)^{-1} \right)
\left( d_2M^2 \right)^{-1} m^1_{11}\partial\left( m^1_{12} \right)
m^1_{22} 
\\&&& =   \partial\left( \left( m^2_{12} \right)^{-1} \right)
\left( m^2_{11} \right)^{-1} \left( d_1M^1 \right)  = \partial\left( \left( m^2_{12} \right)^{-1} \right)
\left( m^2_{11} \right)^{-1} \left( d_1M^2 \right)\\&&& = m^2_{22}= d_0 M^2. 
	\end{align*}
	For $j=1$
	we get
	\begin{align*}
		d_1M^1 &= m^1_{11}\partial\left( m^1_{12} \right) m^1_{22} =
		m^3_{11}\partial\left( m^3_{12} \right)m^3_{22}
		\partial\left( \left( \left( m^3_{12}
		\right)^{-1}m^2_{12}\right)^{m^3_{22}} m^0_{12} \right) m^0_{22}
		\\& = m^3_{11}\partial\left( m^3_{12}\left(
		m^3_{12}
		\right)^{-1} m^2_{12}\right) m^3_{22} \partial\left(m^0_{12}  \right)  m^0_{22}
		\\&  = m^2_{11} \partial\left( m^2_{12} \right) m^0_{22}\partial\left(
		m^0_{12} \right)m^0_{22} 
		=  m^2_{11} \partial\left( m^2_{12} \right) \left( d_1M^0\right) 
		\\&= 
 m^2_{11} \partial\left( m^2_{12} \right) \left( d_0M^2 \right) =
 m^2_{11} \partial\left( m^2_{12} \right) m^2_{22} = d_1M^2.
	\end{align*}
	For $j=2$ we replace $m^3_{22}$ in \eqref{eq:image} by $m^0_{11}$ and obtain
	\begin{align*}
		d_1M^2& = m^2_{11}\partial\left( m^2_{12} \right) m^2_{22}\\& = 
		m^3_{11} \partial\left( m^3_{12} \left( m^1_{12}\left(
		m^0_{12}
		\right)^{-1} \right)^{\left( m^0_{11} \right)^{-1}} \right) m^0_{11}\partial\left(
		m^0_{12}
		\right) m^0_{22} \\
		&=
		m^3_{11} \partial\left( m^3_{12} \right) m^0_{11} \partial\left(
		m^1_{12} \left( m^0_{12} \right)^{-1} \right) \partial\left(
		m^0_{12} \right) m^0_{22} 
		\\& = m^3_{11} \partial\left( m^3_{12} \right) m^3_{11}
		\partial\left( m^1_{12} \right) m^0_{22} 
		= \left( d_1M^3 \right) \partial\left( m^1_{12} \right)
		m^1_{22} \\&= \left( d_0 M^1 \right) \partial\left( m^1_{12}
		\right) m^1_{22} = m^1_{11}\partial\left( m^1_{12} \right)
		m^1_{22} = d_1M^1.
	\end{align*}
	Finally for $j=3$
	we get
	\begin{align*}
		d_1M^3 &= m^3_{11}\partial\left( m^3_{12} \right) m^3_{22}
		= m^2_{11} \partial\left(m^2_{12} \left( m^0_{12} \left(
		m^1_{12}
		\right)^{-1} \right)^{\left( m^0_{11} \right)^{-1}}  \right) m^0_{11}
		\\&= m^2_{11}\partial\left( m^2_{12} \right) m^0_{11}
		\partial\left( m^0_{12}\left( m^1_{12} \right)^{-1} \right)
		\\&= \left( d_1M^2 \right)\left( m^2_{22} \right)^{-1} \left(
		d_1M^0 \right)\left( m^0_{22} \right)^{-1} \partial\left(\left(
		m^1_{12}
		\right)^{-1}\right)
		\\&= \left( d_1M^1 \right)\left( d_0M^2 \right)^{-1} \left(
		d_0M^2
		\right) \left( d_0M^0 \right)^{-1} \partial\left( \left(
		m^1_{12}
		\right)^{-1}
		\right)
		\\&= m^1_{11} \partial\left( m^1_{12} \right) m^1_{22}\left(
		m^1_{22} \right)^{-1} \partial\left( \left(
		m^1_{12} \right)^{-1}
		\right) = m^1_{11} = d_2M^1. 
	\end{align*}
\end{proof}
Now we check the Kan condition at the level $ 4$. 
\begin{proposition}
	Let $\left( A,\cat \right)$ be a crossed module. Then for all $0\le j\le
	4$ the map  $b^4_j\colon N_4\left( A,\cat
	\right)\to \bigwedge^4_j N\left( A,\cat \right)$ is surjective.  
\end{proposition}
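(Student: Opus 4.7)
The plan is to leverage the fact that $N(A,\cat)$ is $3$-coskeletal (Theorem~\ref{coskeletal2}) in order to reduce horn-filling in dimension $4$ to the existence of a single missing $3$-simplex with prescribed $2$-faces. Given a horn $h = (M^0, \ldots, \widehat{M^j}, \ldots, M^4) \in \bigwedge^4_j N(A,\cat)$, since $3$-coskeletality together with Proposition~\ref{coskelet} makes $b_4 \colon N_4(A,\cat) \to \bigwedge^4 N(A,\cat)$ a bijection, producing $M \in N_4(A,\cat)$ filling $h$ is equivalent to producing a single $M^j \in N_3(A,\cat)$ whose faces match the forced values
$$F^j_i := d_{j-1} M^i \quad (i<j), \qquad F^j_i := d_j M^{i+1} \quad (j \le i \le 3).$$
The simplicial identities $d_p d_q = d_{q-1} d_p$ applied on each surviving $M^k$ show that $(F^j_0, F^j_1, F^j_2, F^j_3)$ already lies in $\bigwedge^3 N(A,\cat)$.

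Second, the map $\lambda_3 = \nu_3 b_3$ is a bijection (as used in the proof of Theorem~\ref{coskeletal1}), so $b_3$ is injective. Hence by Proposition~\ref{image}, there is a (necessarily unique) $M^j \in N_3(A,\cat)$ with $d_i M^j = F^j_i$ for $0 \le i \le 3$ if and only if
$$
(F^j_3)_{12}^{\;(F^j_3)_{22}} \, (F^j_1)_{12} \;=\; (F^j_2)_{12}^{\;(F^j_3)_{22}} \, (F^j_0)_{12}.
$$
Thus the entire proof reduces to verifying this single identity for each $j \in \{0,1,2,3,4\}$.

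To carry out the verification I would, for each $j$, expand both $(F^j_i)_{12}$ and $(F^j_3)_{22}$ via the explicit formulas for $d_i$ in Section~\ref{crossed} so that everything is written in terms of entries of the given $M^k$, $k \neq j$. Then the horn coherence relations $d_i M^k = d_{k-1} M^i$ (for $i < k$, both different from $j$) will allow one to rewrite each side of the image equation, using the crossed module axioms~\eqref{cr2} and~\eqref{cr3} to push $\partial$-conjugations through products, until both sides collapse to the same expression; in the case $j = 3$ one uses the rewriting already performed in equation~\eqref{complicated}. The extreme cases $j = 0$ and $j = 4$ are lightest because one of $F^j_3$, $F^j_0$ coincides with a full $d_3 M^{j\pm 1}$ or $d_0 M^{j\pm 1}$ and several entries simplify outright. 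The main obstacle is computational rather than conceptual: for the middle value $j = 2$ both the $(1,2)$ and $(2,2)$ entries of each $F^j_i$ depend nontrivially on all four surviving $M^k$, so the bookkeeping resembles the most involved step of the proof of Theorem~\ref{main1} and ultimately rests on a single application of the crossed module identities.
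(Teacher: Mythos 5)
Your reduction is precisely the one the paper uses: $3$-coskeletality makes $b_4$ a bijection, so filling a horn $M^*\in\bigwedge^4_j N\left( A,\cat \right)$ amounts to producing the single missing face $M^j$ whose boundary is the forced tuple $\beta^4_j M^* \in \bigwedge^3 N\left( A,\cat \right)$, and by Proposition~\ref{image} such an $M^j$ exists if and only if \eqref{eq:image} holds for that tuple. All of this is correct and matches the paper's opening paragraph of the proof.

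The gap is that the five verifications of \eqref{eq:image} --- one for each $j$ --- are only announced, and they are the entire mathematical content of the proposition; everything before them is formal. Moreover, your description of how the verification ``would'' go is not adequate as stated: you propose to push things through with the horn coherences and the crossed monoid axioms \eqref{cr2} and \eqref{cr3}, but those axioms hold in any crossed monoid, whereas the statement is genuinely false without invertibility (the paper's final proposition extracts an inverse for an arbitrary $a\in A(t)$ from the surjectivity of a dimension-$4$ horn filler, so $b^4_j$ cannot be surjective unless every $A(t)$ is a group and $\cat$ is a groupoid). Each case therefore must, and in the paper does, manipulate inverses in $A(t)$ and invoke the invertibility of the $\cat$-action on $A$; the cases $j=3$ and $j=4$ only close up after cancelling a conjugation by $\partial(w_{04})$ or $\partial(w_{40})$. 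The paper also needs a nontrivial bookkeeping device --- the $5\times 5$ array $W$ of upper-corner entries $w_{st}$ of the faces $d_\bullet M^t$, which is symmetric off the $j$-th row and column --- to make the computation tractable. So the proposal identifies the correct target identity but stops exactly where the work, and the use of the crossed-module hypothesis, begins.
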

\begin{proof}
	We know by Proposition~\ref{coskeletal2} that $b_4\colon N_4\left( A,\cat
	\right)\to \bigwedge^4 N\left( A,\cat \right)$ is surjective. Thus if we
	show that for every $0\le j\le 4$ any $M^*\in \bigwedge^4_j N\left(
	A,\cat
	\right)$ can be extended by $M^j\in N_3\left( A,\cat \right)$ to an element of $\bigwedge^4 N\left( A,\cat
	\right)$, the proposition will be proved. 
	The existence of such $M^j$ is equivalent to  $\beta^4_j M^*\in
	\mathrm{Im}\left( b^3 \right)$, where $\beta^4_j\colon \bigwedge^4_j
	N\left( A,\cat \right)\to \bigwedge^3 N\left( A,\cat \right)$ is defined
	on page~\pageref{beta}. Therefore by Proposition~\ref{image} we have to check
	that \eqref{eq:image} holds for $\beta^4_j M^*$.  

	Before doing this let us introduce some notation. We define the elements
	\begin{align*}
		m_{22}&  := m^0_{11} = m^3_{22} = m^4_{22}\\
		m_{23} & := m^0_{12} = m^4_{23}\\
		m_{33} & := m^0_{22} = m^1_{22} = m^4_{33}. 
	\end{align*}
	This definitions should be understand in a way that left hand side
	element is defined to be any of available element in $M^*$ on the right
	hand side. Moreover, if more then one element on the right hand side is
	available then all choices give the same result. The last assertion
	follows from $M^*\in \bigwedge^4_j N\left(A,\cat  \right)$. 

	Now we define the matrix $W = \left( w_{st} \right)_{s,t=0}^4$ to be
	$$
	\left( 
	\begin{array}{ccccc}
		\varnothing & m^1_{23} & m^2_{23} & m^3_{23} & m^4_{23}\\[2ex]
		m^0_{23} & \varnothing & \left( m^2_{13}
		\vphantom{\sum\limits_{1}^{2}}
		\right)^{m_{22}\partial\left( m_{23} \right)m_{33}}\!\!\!\!\!
		m^2_{23} & \left( m^3_{13} 	\vphantom{\sum\limits_{1}^{2}}
\right)^{m_{22}} \!\!\!\!\!m^3_{23} & \left(
		m^4_{13} 	\vphantom{\sum\limits_{1}^{2}}
\right)^{m_{22}} \!\!\!\!\!m^4_{23} \\[2ex]
		\left( m^0_{13} 	\vphantom{\sum\limits_{1}^{2}}
\right)^{m_{33}} \!\!\!\!\!m^0_{23} & \left(
		m^1_{13}	\vphantom{\sum\limits_{1}^{2}}

		\right)^{m_{33}} \!\!\!\!\!m^1_{23} & \varnothing & m^3_{12}m^3_{13} &
		m^4_{12}m^4_{13} \\[2ex]
		m^0_{12}m^0_{13} & m^1_{12}m^1_{13} & m^2_{12} m^2_{13} &
		\varnothing & m^4_{12}\\[2ex]
		m^0_{12} & m^1_{12} & m^2_{12} &m^3_{12} &\varnothing
	\end{array} 
	\right) 
	$$
or in other terms
$$
w_{st} = 
\begin{cases}
	\left(d_{s-1}M^t\right)_{12} & s<t\\
	\left(d_s M^t\right)_{12} & t>s.
\end{cases}
$$
For a given $j$ and $M^*\in \bigwedge^4_j N\left( A,\cat \right)$ only the
elements outside of $j$-th column of $W$ are defined. Moreover, the $j$-th row
of $W$ gives the upper-corner elements of the matrices $\beta^4_j M^*$ and the
relation between them equivalent to \eqref{eq:image} can be read off from the
$j$-th column. It follows
from $M^*\in \bigwedge^4_j N\left( A,\cat \right)$ that if we remove $j$-th
column and $j$-th row from $W$ then the resulting matrix is symmetric. We will
use this fact in the computations bellow. 

Now for $j=0$ we have to check that $w_{02}w_{01}^{-1} = \left(
w_{04}^{-1}
w_{03} \right)^{m_{33}}$. 
We have
\begin{align*}
	w_{01} & = \left( w_{31}^{-1} w_{41} \right)^{m_{33}} w_{21} & 	w_{03} & = \left( w_{23}^{-1} w_{43} \right)^{m_{22}} w_{13}\\
	w_{02} & = \left( w_{32}^{-1} w_{42} \right)^{m_{22} \partial\left(
	m_{23}
	\right)m_{33}} w_{12} & 	w_{04} & = \left( w_{24}^{-1} w_{34}
	\right)^{m_{22}} w_{14}.
\end{align*}
Moreover $m_{23} = m^4_{23} = w_{04}$ and for any $a\in A\left( t \right)$ holds
$a^{\partial\left( w_{04} \right)} = w_{04}^{-1} a w_{04}$. 
Since $W$ is symmetric we get
\begin{align*}
w_{04}^{-1} w_{03}&  = w_{14}^{-1} \left( w_{34}^{-1} w_{24}
	w_{23}^{-1} w_{43} \right)^{m_{22}} w_{13}\\
	w_{02}w_{01}^{-1}& = \left(w_{14}^{-1}\left( w_{34}^{-1}
	w_{24}w_{32}^{-1} w_{42} w_{24}^{-1} w_{34} \right)^{m_{22}}
	w_{14}\right)^{m_{33}} w_{12} w_{21}^{-1} \left( w_{41}^{-1}
	w_{31} \right)^{m_{33}}\\& = \left( w_{14}^{-1} \left( w_{34}^{-1}
	w_{24} w_{32}^{-1} w_{34}\right)^{m_{22}} w_{31}  \right)^{m_{33}}
	= \left( w_{04}^{-1} w_{03} \right)^{m_{33}}. 	
	\end{align*}
For $j=1$ we have to check that $w_{12}w_{10}^{-1}= \left(
w_{14}^{-1}w_{13}
\right)^{m_{33}} $. We have
\begin{align*}
	w_{10} & = \left( w_{30}^{-1} w_{40} \right)^{m_{33}}
	w_{20}  & w_{13} &= \left( w_{43}^{-1} w_{23} \right)^{m_{22}}
w_{03}\\
	w_{12} &= \left( w_{42}^{-1} w_{32}
	\right)^{m_{22}\partial\left( m_{23} \right) m_{33}}
	w_{02} & w_{14} &= \left( w_{34}^{-1} w_{24} \right)^{m_{22}}w_{04}.
\end{align*}
Therefore taking into account that $m_{23} =
w_{04}$ we get
\begin{align*}
	w_{14}^{-1} w_{13} &= w_{04}^{-1} \left(
	w_{24}^{-1} w_{34} w_{43}^{-1} w_{23}
	\right)^{m_{22}} w_{03} = w_{04}^{-1} \left(
	w_{24}^{-1} w_{23}
	\right)^{m_{22}} w_{03}\\
	w_{12}w_{10}^{-1}& = \left(w_{04}^{-1} \left(
	w_{42}^{-1} w_{32}	
	\right)^{m_{22}} w_{04} \right)^{m_{33}}
	w_{02} w_{20}^{-1} \left( w_{40}^{-1}
	w_{30} \right)^{m_{33}}\\& = 
	\left( w_{04}^{-1} \left( w_{42}^{-1}
	w_{32} \right)^{m_{22}} w_{30}
	\right)^{m_{33}} = \left(
	w_{14}^{-1}w_{13} \right)^{m_{33}}. 
\end{align*}
For $j=2$ we have to check that $w_{21}w_{20}^{-1} = \left(
w_{24}^{-1}w_{23} \right)^{m_{22}\partial\left( m_{23} \right)m_{33}}$. 
We have 
\begin{align*}
	w_{20} &= \left( w_{40}^{-1} w_{30} \right)^{m_{33}} w_{10} & 
	w_{23}^{m_{22}} &= w_{43}^{m_{22}} w_{13} w_{03}^{-1} \\
	w_{21} &= \left( w_{41}^{-1} w_{31} \right)^{m_{33}} w_{01} &
	w_{24}^{m_{22}} &= w_{34}^{m_{22}} w_{14} w_{04}^{-1}.
\end{align*}
Therefore
\begin{align*}
	\left( w_{24}^{-1}w_{23} \right)^{m_{22}\partial\left( m_{23} \right)}
	&= 
	w_{04}^{-1} w_{04}w_{14}^{-1} \left( w_{34}^{-1} w_{43}
	\right)^{m_{22}} w_{13}w_{03}^{-1} w_{04}\\
	&= w_{14}^{-1} w_{13} w_{03}^{-1} w_{04}\\
	w_{21}w_{20}^{-1}& = \left( w_{41}^{-1} w_{31}  \right)^{m_{33}}
	w_{01} w_{10}^{-1} \left( w_{30}^{-1} w_{40} \right)^{m_{33}} 
	\\&= \left( w_{41}^{-1} w_{31} w_{30}^{-1} w_{04} \right)^{m_{33}} =
	\left( w_{24}^{-1}w_{23} \right)^{m_{22}\partial\left( m_{23}
	\right)m_{33}}. 
\end{align*}
For $j=3$ we have to check that $w_{31}w_{30}^{-1} = \left( w_{34}^{-1}
w_{32} \right)^{m_{22}}$. 
We have
\begin{align*}
	w_{30}^{m_{33}} & = w_{40}^{m_{33}} w_{20}w_{10}^{-1} &
	w_{32}^{m_{22}\partial\left( m_{23} \right)m_{33}} &= 
	w_{42}^{m_{22}\partial\left( m_{23}  \right)m_{33}}
	w_{12}w_{02}^{-1}\\
	w_{31}^{m_{33} } &= w_{41}^{m_{33}} w_{21} w_{01}^{-1} &
	w_{34}^{m_{22}} &= w_{24}^{m_{22}} w_{04} w_{14}^{-1}. 
\end{align*}
Therefore 
\begin{align*}
	\left( w_{31}w_{30}^{-1} \right)^{\partial\left( w_{04} \right)m_{33}}
	&= \left( w_{04}^{-1}w_{41} \right)^{m_{33}} w_{21} w_{01}^{-1}
	w_{10} w_{20}^{-1} \left( w_{40}^{-1}w_{04} \right)^{m_{33}}\\
	&= \left( w_{04}^{-1}w_{41} \right)^{m_{33}} w_{21} w_{20}^{-1}\\
	\left( w_{34}^{-1}w_{32} \right)^{m_{22}\partial\left( w_{04} \right)
	m_{33}} &= \left( w_{04}^{-1} w_{14} w_{04}^{-1} w_{24}^{m_{22}}
	w_{04} \right)^{m_{33}} \left( w_{04}^{-1} w_{42}^{m_{22}} w_{04}
	\right)^{m_{33}} w_{12} w_{02}^{-1}\\&= 
	\left( w_{04}^{-1} w_{14} \right)^{m_{33}} w_{12}w_{02}^{-1} = \left(
	w_{31}w_{30}^{-1} \right)^{\partial\left( w_{04} \right) m_{33}}.  
\end{align*}
And now the required equality follows from the invertibility of action of
$\cat$
on $A$. 

For $j=4$ we have to check that $w_{41}w_{40}^{-1} = \left( w_{43}^{-1}
w_{42} \right)^{m_{22}}$. We have
\begin{align*}
	w_{40}^{m_{33}} &= w_{30}^{m_{33}} w_{10}w_{20}^{-1} & 
	w_{42}^{m_{22}\partial\left( m_{23} \right)m_{33}} &=
	w_{32}^{m_{22}\partial\left( m_{23} \right)m_{33}} w_{02}
	w_{12}^{-1}\\
	w_{41}^{m_{33}} & = w_{31}^{m_{33}} w_{01} w_{21}^{-1} &
	w_{43}^{m_{22}} & = w_{23}^{m_{22}} w_{03} w_{13}^{-1}. 
\end{align*}
Note that this time we can not use $m_{23} = w_{04}$, instead we will use
$m_{23} = w_{40}$. We get
\begin{align*}
	\left( w_{41} w_{40}^{-1} \right)^{\partial\left( w_{40}\right)m_{33} }
	&= \left( w_{40}^{-1} w_{41} \right)^{m_{33}} =w_{20} w_{10}^{-1} \left(
	w_{30}^{-1} w_{31} \right)^{m_{33}} w_{01} w_{21}^{-1}\\
	\left( w_{43}^{-1} w_{42} \right)^{m_{22}\partial\left( w_{40}
	\right)m_{33}} & = \left( w_{40}^{-1} w_{13} w_{03}^{-1}
	\left( w_{23}^{-1} \right)^{m_{22}} w_{40}
	\right)^{m_{33}}\\&\phantom{=(}\times  \left(
	w_{40}^{-1} w_{32}^{m_{22}} w_{40}
	\right)^{m_{33}} w_{02} w_{12}^{-1} 
	\\& = \left( w_{40}^{-1} \right)^{m_{33}} \left( w_{13}w_{03}^{-1}
	\right)^{m_{33}} w_{40}^{m_{33}} w_{02} w_{12}^{-1}
	\\&= w_{20} w_{10}^{-1} \left( w_{30}^{-1} w_{13} w_{03}^{-1}
	w_{30}  \right)^{m_{33}}w_{10} w_{20}^{-1} w_{02} w_{12}^{-1}
	\\&= w_{20} w_{10}^{-1}\left( w_{30}^{-1} w_{13} \right)^{m_{33}}
	w_{10} w_{12}^{-1}
\end{align*}
and the required equality follows from the invertibility of action of $\cat$ on
$A$. 
\end{proof}
Now we can compute homotopy groups of $N\left( A,\cat \right)$ for a crossed
module $\left( A,\cat \right)$. Let $t\in \cat_0$. Then $\pi\left( N\left(
A,\cat
\right), t
\right)$ is given by the classes $\left[ g \right]$ of elements $g\in \cat_1$
such that $s\left( g \right)=t\left( g \right)=t$, that is $g\in \cat\left( t,t
\right)$. Two elements $g_1$, $g_2\in \cat\left( t,t \right)$ belong to the same
class if and only if there is an element $M\in N_2\left( A,\cat \right)$ such
that $b^2\left( M \right) = \left( 1_t, g_1,g_2 \right)$. This implies
$m_{11} = 1_t$ and $m_{22} = g_2$. Therefore $g_1 = \partial\left( m_{12}
\right)m_{22} = \partial\left( m_{12} \right)g_2 $. As the element $m_{12}\in
A\left( t \right)$ can be chosen arbitrary we see that $g_1$ and $g_2$ are in
the same class if and only if $g_1 \mathrm{Im}\partial_t = g_2
\mathrm{Im}\partial_t$. Note that $\mathrm{Im}\partial_t$ is a normal subgroup
of $\cat\left( t,t \right)$ as for all $g\in \cat\left( t,t \right)$ and
$a\in A\left( t \right)$ we have $g^{-1}\partial\left( a \right)g =
\partial\left( a^g \right)$.  Thus $\pi_1$ can be identified with the quotient
group $\left.\raisebox{0.3ex}{$ \cat\left( t,t
\right)$}\middle/\!\raisebox{-0.3ex}{$ \partial\left( A\left( t \right) \right)$}\right.
$ as a set. Now we show that the composition law on $\pi_1\left( N\left( A,\cat
\right),t
\right)$ coincides with the composition law of $\left.\raisebox{0.3ex}{
$\cat\left( t,t \right)$}\middle/\!\raisebox{-0.3ex}{$ \partial\left( A\left( t
\right)
\right)$}\right. $. Let $\left[ g_1 \right]$, $\left[ g_2 \right]\in \pi_1\left(
N\left( A,\cat \right),t \right)$.  Then 
$$
M := \left( 
\begin{array}{cc}
	g_1 & e_t\\
	& g_2
\end{array}
\right)\in N_2\left( A,\cat \right)
$$
is the preimage of $\left( g_1,\varnothing, g_2 \right)\in \bigwedge^2_1 N\left(
A,\cat \right)$ under $b^2_2$. Therefore $[g_1][g_2] = \left[d_2\left( M \right)  \right] =
\left[ g_1g_2 \right]$. 

Now we compute $\pi_2 := \pi_2\left( N\left( A,\cat \right), t \right)$. The
elements of $\pi_2$ are the classes $\left[ M \right]$ of elements $M\in
N_2\left( A,\cat \right)$ such that $d_j M= 1_t$, $0\le j\le 2$. This implies
$m_{11} = m_{22} = 1_t$ and $m_{12}\in \mathop{Ker}\left( \partial_t \right)$.
Two elements $M^1$ ,$M^2\in N_2\left( A,\cat \right)$ belong to the same class
if and only if exists $M\in N_2\left( A,\cat \right)$ such that $b^3M= \left(
s_0\left( 1_t \right), s_0\left( 1_t \right), M^1,M^2 \right)$. Such $M$
necessarily has the form
$$
\mu_3\left( s_0\left( 1_t \right), M^2, e_t \right) = \left( 
\begin{array}{ccc}
	1_t & m^2_{12} &e_t \\
	& 1_t &e_t \\
	& & 1_t
\end{array}
\right) 
$$
and therefore $m^2_{12} = m^1_{12}$. Thus we see that $\pi_2 =
\mathop{Ker}\left( \partial_t \right)$ as a set. Let $M^1$, $M^2 \in
\pi_2$. Then 
$$
M :=  \mu_3\left(s_0\left( 1_t \right), M^2, m^1_{12} \right) = 
\left( 
\begin{array}{ccc}
	1_t & m^2_{12} & m^1_{12} \\
	& 1_t & \\
	&& 1_t
\end{array}
\right)
$$
is the preimage of $\left( s_0\left( 1_t \right), M^1, \varnothing, M^2
\right)$ under $b^3_2$. Therefore
$$
M^1 M^2= d_2 M = \left( 
\begin{array}{cc}
	1_t & m^2_{12}m^1_{12}\\
	& 1_t
\end{array}
\right).
$$
As $\mathop{Ker}\left( \partial_t \right)$ is a commutative group we see that
$\pi_2$ and $\mathop{Ker}\left( \partial_t \right)$ are isomorphic as groups. 

Since $N\left( A,\cat \right)$ is a $3$-coskeletal set all other homotopy groups
of $N\left( A,\cat \right)$ are trivial. Thus $N\left( A,\cat \right)$ is a
$2$-type. 

\begin{proposition}
	Let $\left( A,\cat \right)$ be a crossed module of monoids such that
	$N\left( A,\cat \right)$ is a Kan set. Then $\left( A,\cat \right)$ is a
	crossed module. 
\end{proposition}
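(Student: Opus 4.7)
The plan is to show that $\cat$ is a groupoid and that each monoid $A(t)$ is a group; together these are exactly the conditions that $(A,\cat)$ belongs to the subcategory $\mathbf{XMod}$ of crossed modules.

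For the groupoid property, I would fix any morphism $\alpha\in\cat(p,q)$ and work in dimension $2$. The pair $(1_{q},\alpha)$ is a valid $0$-horn in $\bigwedge^{2}_{0}N(A,\cat)$ because both entries have target $q$; by the Kan hypothesis there is a filler
$M=\left(\begin{smallmatrix}\alpha & c\\ & b\end{smallmatrix}\right)$
with $d_{1}M=1_{q}$, which forces $\alpha\,\partial(c)\,b=1_{q}$ and exhibits $\partial(c)\,b$ as a right inverse of $\alpha$. Dually, the pair $(\alpha,1_{p})\in\bigwedge^{2}_{2}N(A,\cat)$ is a valid $2$-horn (same source $p$), and a filler produces $a\,\partial(c)\,\alpha=1_{p}$, a left inverse. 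Hence $\alpha$ is invertible and $\cat$ is a groupoid.

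To handle $A(t)$ I would pass to dimension $3$ and exploit the ``only if'' direction of Proposition~\ref{image}, which, as noted there, is valid for every crossed monoid and provides a single nontrivial equation that every filler must satisfy. Given $a\in A(t)$, the choice
\[
M^{0}=\begin{pmatrix}1_{t}&a\\ &1_{t}\end{pmatrix},\qquad M^{1}=M^{3}=\begin{pmatrix}1_{t}&e_{t}\\ &1_{t}\end{pmatrix}
\]
defines a $2$-horn in $\bigwedge^{3}_{2}N(A,\cat)$: the three compatibility equations all reduce to $1_{t}=1_{t}$. A Kan filler $M\in N_{3}(A,\cat)$ exists, and Proposition~\ref{image} forces $(m^{3}_{12})^{m^{3}_{22}}m^{1}_{12}=(m^{2}_{12})^{m^{3}_{22}}m^{0}_{12}$, which in our case collapses to $e_{t}=m^{2}_{12}\cdot a$; the entry $m^{2}_{12}=(d_{2}M)_{12}$ is therefore a left inverse of $a$. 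Placing $a$ instead in $M^{3}$ and taking $M^{0}=M^{2}=\left(\begin{smallmatrix}1_{t}&e_{t}\\ &1_{t}\end{smallmatrix}\right)$ gives a $1$-horn whose Kan filler, by the same equation, yields $a\cdot m^{1}_{12}=e_{t}$, a right inverse. The standard calculation $\ell=\ell(ar)=(\ell a)r=r$ then promotes these one-sided inverses to a two-sided one, so $A(t)$ is a group.

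The main difficulty I expect is resisting the urge to work entirely in dimension $2$: the two-dimensional horns only pin down the morphisms of $\cat$, since the $A$-entry of a $2$-simplex can always be absorbed into the free choice of the diagonal morphism $b$. The content concerning $A$ lives one dimension higher, where Proposition~\ref{image} supplies exactly one nontrivial equation per filler; the technical step is simply to engineer matching horns so that this equation degenerates to left- and right-divisibility of $e_{t}$ by $a$, which the two matrices above accomplish in mirror-image fashion.
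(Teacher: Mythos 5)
Your proof is correct. The first half (invertibility of every $\alpha\in\cat(p,q)$ via fillers of the two outer horns $(1_q,\alpha)\in\bigwedge^2_0$ and $(\alpha,1_p)\in\bigwedge^2_2$) is essentially identical to the paper's argument, which uses the same pair of $2$-dimensional horns. For the invertibility of elements of $A(t)$ you take a genuinely different, lower-dimensional route: the paper fills a single horn in dimension $4$, built from four $3\times 3$ matrices with $a$ placed in the outer faces, and reads a two-sided inverse of $a$ directly off the entries of $d_1M$ and $d_3M$ of the filler; you instead fill two horns in dimension $3$ and extract a left and a right inverse separately from the single constraint of Proposition~\ref{image}, whose ``only if'' direction, as the paper notes, holds for arbitrary crossed monoids, and then merge them by the standard $\ell=\ell(ar)=(\ell a)r=r$ argument. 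I checked your two horns: the compatibility conditions for $(M^0,M^1,\varnothing,M^3)\in\bigwedge^3_2$ and $(M^0,\varnothing,M^2,M^3)\in\bigwedge^3_1$ do all reduce to $1_t=1_t$, and the identity of Proposition~\ref{image} specializes to $e_t=m^2_{12}\,a$ and $a\,m^1_{12}=e_t$ respectively, so both steps are sound. Your version has the advantage of staying in the lowest dimension in which the $A$-part of the structure is actually constrained and of reusing a lemma already established; the paper's version avoids the split into one-sided inverses at the cost of working one dimension higher. Your closing remark that dimension $2$ cannot detect invertibility in $A(t)$ --- because the $A$-entry of a $2$-simplex can be absorbed into the free choice of a diagonal morphism --- correctly identifies why the jump in dimension is unavoidable.
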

\begin{proof}
	We have to show that $\cat$ is a groupoid and that for every $t\in
	\cat_0$ the monoid $A\left( t \right)$ is a group. Let $g\in \cat\left(
	s,t \right)$. Then $\left( g, 1_s,\varnothing  \right)$ and $\left( 
	\varnothing,1_t,  g
	\right)$ are elements of $\bigwedge^2_2 N\left( A,\cat \right)$ and
	$\bigwedge^2_0 N\left( A,\cat \right)$ respectively. Since
	$N\left( A,\cat \right)$ is a Kan simplicial set there exist their
	preimages  $M^1$ and
	$M^2$  in $N_2\left( A,\cat \right)$ under $b^2_2$ and $b^2_0$
	respectively. Then $1_s = d_2M^1 =g\partial\left( m^1_{12}
	\right)m^1_{22} $ and $1_t = d_2 M^2 = m^2_{11}\partial_t\left(
	m^2_{22}g
	\right)$, which shows that $g$ has left and right inverse. By the usual
	trick they are equal to each other. 

Now let $a \in A\left( t \right)$. We consider
$$
\left( 
\left( 
\begin{array}{ccc}
	1_t & e_t & a \\
	& 1_t & e_t\\
	&& 1_t
\end{array}
\right), 
\left( 
\begin{array}{ccc}
		1_t & e_t & e_t \\
	& 1_t & e_t\\
	&& 1_t
\end{array}
\right), \varnothing, 
\left( 
\begin{array}{ccc}
		1_t & e_t & e_t \\
	& 1_t & e_t\\
	&& 1_t
\end{array}
\right),
\left( 
\begin{array}{ccc}
	1_t & e_t & a \\
	& 1_t & e_t\\
	&& 1_t
\end{array}
\right)
\right), 
$$
which is an element of $\bigwedge^5_2 N\left( A,\cat \right)$. Since $N\left(
A,\cat \right)$ is a Kan simplicial set there exists $M$ in the preimage of this
element under $b^5_2$. This matrix necessarily has the form
$$
\left( 
\begin{array}{ccccc}
	1_t & e_t & a & m_{15}\\
	& 1_t & e_t & a\\
	&& 1_t & e_t\\
	&&& 1_t
\end{array}
\right).
$$
From the explicit form for $d_1 M$ and $d_3 M$
we see that $m_{15}$ is the inverse element to $a$ in $A\left( t \right)$. 
\end{proof}

\bibliography{nerv}
\bibliographystyle{amsalpha}

\end{document}